\pgfplotsset{compat=1.12}
\def\sideremark#1{\ifvmode\leavevmode\fi\vadjust{\vbox to0pt{\vss 
      \hbox to 0pt{\hskip\hsize\hskip1em           
 \vbox{\hsize2cm\tiny\raggedright\pretolerance10000
 \noindent #1\hfill}\hss}\vbox to8pt{\vfil}\vss}}} %
\pgfplotsset{compat=1.12}
\newtheorem{introtheorem}{Theorem}
\newtheorem{theorem}{Theorem}[section]
\newtheorem{lemma}[theorem]{Lemma}
\newtheorem{conjecture}[theorem]{Conjecture}
\theoremstyle{definition}
\newtheorem{definition}[theorem]{Definition}
\newtheorem{example}[theorem]{Example}
\theoremstyle{remark}
\newtheorem{remark}[theorem]{Remark}
\newcommand{\Y}{\ensuremath{Y_{\bullet}}}
\numberwithin{figure}{section}
\begin{document}

\title[On weighted bounded negativity for rational surfaces]{On weighted bounded negativity for rational surfaces}

\author[Galindo]{Carlos Galindo}

\address{Universitat Jaume I, Campus de Riu Sec, Departamento de Matem\'aticas \& Institut Universitari de Matem\`atiques i Aplicacions de Castell\'o, 12071
Caste\-ll\'on de la Plana, Spain} \email{galindo@uji.es}

\author[Monserrat]{Francisco Monserrat}

\address{Universitat Polit\`ecnica de Val\`encia, Departament de Matem\`atica Aplicada \&  Institut Universitari de Matem\`atica Pura i Aplicada, 46022
València, Spain} \email{framonde@mat.upv.es}

\author[Moreno-\'Avila]{Carlos-Jes\'us Moreno-\'Avila}

\address{Universidad de Extremadura, Escuela Politécnica, Departamento de Matem\'aticas, 10003
C\'aceres, Spain} \email{cjmoravi@unex.es}

\subjclass[2020]{Primary: 14C20; Secondary: 14E15, 14J26}
\keywords{Bounded negativity conjecture; weighted bounded negativity conjecture; rational surface; effective cone; configuration of infinitely near points}
\thanks{The authors were partially funded by MCIN/AEI/10.13039/501100011033 and by “ERDF A way of
making Europe”, Grant PID2022-138906NB-C22, as well as by Universitat Jaume I, Grants UJI-B2021-02 and GACUJIMB-2023-03}

\maketitle

\begin{abstract}
The weighted bounded negativity conjecture considers a smooth projective surface $X$ and looks for a common lower bound on the quotients $C^2/(D\cdot C)^2$, where $C$ runs over the integral curves on $X$ and $D$ over the big and nef divisors on $X$ such that $D \cdot C >0$. We focus our study on rational surfaces $Z$. Setting $\pi: Z \rightarrow Z_0$ a composition of blowups giving rise to $Z$, where $Z_0$ is the projective plane or a Hirzebruch surface, we give a common lower bound on $C^2/(H^* \cdot C)^2$ whenever $H^*$ is the pull-back of a nef divisor $H$ on $Z_0$. In addition, we prove that, only in the case when a nef divisor $D$ on $Z$  approaches the boundary of the nef cone, the quotients $C^2/(D\cdot C)^2$ could tend to minus infinity.

\end{abstract}

\section{Introduction}
The {\it bounded negativity conjecture} (BNC) is an open problem that has been attracting mathematicians for quite some time. It seems that the BNC was already in the mind of Enriques and Artin \cite{MR3079262}. The BNC can be formulated as follows \cite{MR2976940, MR3079262}:

\begin{conjecture}[BNC]
Let $X$ be a smooth projective surface over an algebraically closed
field of characteristic zero. Then there is a non-negative integer $B(X)$, depending only on $X$, such that $C^2\geq -B(X)$ for every reduced and irreducible curve $C$ on $X$.
\end{conjecture}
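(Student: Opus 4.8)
The plan is to study the self-intersection $C^2$ of a reduced irreducible curve $C$ through the adjunction formula $C\cdot(C+K_X)=2p_a(C)-2$, which rewrites as $C^2=2p_a(C)-2-K_X\cdot C$. Since $p_a(C)\ge 0$ for an integral curve, this yields $C^2\ge -2+(-K_X)\cdot C$, so the entire problem reduces to bounding $(-K_X)\cdot C$ \emph{from below} uniformly over all $C$. I would therefore organise the argument by the position of $K_X$ relative to the effective and nef cones of $X$, treating successively the cases where $-K_X$ is nef, where $-K_X$ is merely $\mathbb{Q}$-effective, and the remaining cases.

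First I would settle the case where $-K_X$ is $\mathbb{Q}$-effective. Writing $-K_X\equiv A$ with $A$ effective, any integral curve $C$ not contained in $\supp(A)$ satisfies $(-K_X)\cdot C=A\cdot C\ge 0$, so that $C^2\ge -2$; the finitely many components of $A$ have fixed self-intersections, and one may take $B(X)=\max\{2,\,-A_1^2,\dots,-A_r^2\}$. This already disposes of every surface with $-K_X$ nef or effective, covering del Pezzo surfaces and, more to the point here, rational surfaces obtained from $\mathbb{P}^2$ or a Hirzebruch surface by a controlled number of blowups.

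Next I would attempt the reduction to a minimal model. For $X$ of negative Kodaira dimension the minimal model is $\mathbb{P}^2$ or a Hirzebruch surface (exactly the base $Z_0$ of the present paper), and factoring the morphism $\pi\colon X\to Z_0$ as a sequence of blowups, the self-intersection of the strict transform of a curve decreases by the square of its multiplicity at each blown-up point: $C^2=(\pi_*C)^2-\sum_p m_p(C)^2$. Since $(\pi_*C)^2$ and the number of centres are bounded, producing $B(X)$ amounts to finding an a priori bound on the multiplicities $m_p(C)$, valid simultaneously for all integral $C$.

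This last point is the main obstacle, and it is the reason the statement stands as a conjecture rather than a theorem. When $X$ is a rational surface obtained by many blowups --- or a surface of general type, where no effective anticanonical divisor is available --- there may be infinitely many integral curves of strictly negative self-intersection, and the multiplicities $m_p(C)$ need not be uniformly bounded; ruling out self-intersections tending to $-\infty$ would require either a Bogomolov-type inequality or a logarithmic Miyaoka--Yau estimate strong enough to hold uniformly across the whole family of curves, which is precisely what is not known. It is exactly this intractable case that the weighted reformulation developed in this paper is designed to bypass, by replacing the uniform bound on $C^2$ with a bound on the quotients $C^2/(H^*\cdot C)^2$.
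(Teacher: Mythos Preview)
The statement you were asked about is the Bounded Negativity \emph{Conjecture}; the paper does not prove it, nor does it claim to. It is introduced as an open problem, and the paper immediately pivots to its weighted variant. There is therefore no ``paper's own proof'' to compare against.

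Your write-up is not really a proof proposal either, and you recognise this: you sketch the adjunction approach, show that it settles the case where $-K_X$ is $\mathbb{Q}$-effective (which matches the known partial result the paper cites from \cite{MR2608654}), and then correctly identify the obstruction in the general case --- the lack of any uniform control on the multiplicities $m_p(C)$ when one passes to a minimal model. Your closing paragraph accurately describes why the conjecture remains open and how the paper's weighted formulation is designed to sidestep exactly this difficulty.

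One small correction: in your first reduction you treat the case ``$-K_X$ $\mathbb{Q}$-effective'', but the cited partial result in the literature is for $K_X$ (not $-K_X$) $\mathbb{Q}$-effective, i.e.\ non-negative Kodaira dimension. Your adjunction argument actually works in both directions (if either $K_X$ or $-K_X$ is $\mathbb{Q}$-effective one gets a bound), so the mathematics is fine, but be careful with the sign when citing the standard reference.
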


The above conjecture can be equivalently stated for reduced curves instead of reduced and irreducible curves  \cite[Proposition 3.8.2]{MR2976940}.
BNC fails in positive characteristic \cite[Exercise V.1.10]{MR463157}, even for rational surfaces \cite{MR4373247}. In characteristic zero, it remains open, although some situations where bounded negativity holds are known; for instance, when the canonical divisor of $X$ is $\mathbb{Q}$-effective \cite[Corollary 1.2.3]{MR2608654}, when $X$ admits a surjective endomorphism that is not an isomorphism \cite[Proposition 2.1]{MR3079262}, when each pseudo-effective divisor on $X$ has an integral Zariski decomposition \cite[Theorem 2.3]{MR3731330} or when $X$ has the bounded cohomology property \cite[Proposition 14]{MR3694974}.

An interesting question related to the BNC was posed by Demailly \cite[Question 6.9]{MR1178721}. He asked whether the global Seshadri constant is positive for any smooth projective surface $X$. Setting $\epsilon(D,x)$ the Seshadri constant of a divisor $D$ on $X$ at a point $x \in X$, it is even unknown whether for every fixed $x \in X$, the value
\begin{equation}
\label{ses}
 \inf \{ \epsilon(D,x)\; |\; D \in \mathrm{Pic}(X) \mbox{ is ample} \}
\end{equation}
is always positive. In fact, the veracity of the BNC implies an affirmative answer to this last question (see \cite{MR3079262}).

Let $Y$ be a surface obtained after blowing-up the projective plane $\mathbb{P}^2$ at $n\geq 10$ very general points, then a conjecture close to the BNC states that $C^2 \geq -1$ for any integral (reduced and irreducible) curve $C$ on $Y$ and, furthermore, if $C^2=-1$, then $C$ is a $(-1)$-curve. This conjecture implies the so-called Nagata conjecture and it is implied by the SHGH conjecture \cite{MR3114941}.

There are few results about bounded negativity for surfaces obtained by blowing-up minimal ones. When the BNC fails for rational surfaces obtained by blowing-up a configuration of infinitely near points over $\mathbb{P}^2$, it must exist plane curves with singular points in the plane of big multiplicity compared to the degree. With this motivation, an asymptotic invariant, the H-constant, has been introduced for treating the BNC on the blowups of all configurations over an algebraic surface (see \cite[Remark 2.3]{MR3431599} and \cite{MR3462865}).

A {\it weak version} of the BNC was proposed in \cite{MR2976940, MR3079262}. The specific result, proved in \cite{MR3981104}, considers a smooth complex projective surface $X$ and proves the existence of a non-negative integer $B(X,g)$, depending only on $X$ and an integer $g$, satisfying $C^2 \geq - B(X,g)$ for any reduced curve $C$ on $X$ whose irreducible components have geometric genus bounded by $g$.

An {\it asymptotic approach} to the BNC was proposed in \cite[Section 1.3]{MR2608654}. This approach fixes a nef (big and nef in the statement of \cite{MR2976940}) divisor $D$ on a smooth projective surface $X$ and looks for a lower bound (as tight as possible) on the set
\begin{equation}
\label{SS}
\left\{ \frac{C^2}{(D \cdot C)^2} \; | \; C \mbox{ is a reduced and irreducible curve on $X$ such that } D \cdot C > 0 \right\}.
\end{equation}

For rational surfaces obtained by blowing-up a configuration of infinitely near points over $\mathbb{P}^2$, some advances on this approach can be found in \cite[Theorem 4.1 and Corollaries 4.2, 4.2 and 4.5]{MR4631420}.

The asymptotic approach to the BNC can be regarded as a first step to study the following variant of the BNC, usually named the {\it weighted bounded negativity conjecture} (WBNC).

\begin{conjecture}[WBNC]
Let $X$ be a smooth projective surface over an algebraically closed
field of characteristic zero. Then,  there exists a non-negative integer $B_w(X)$, depending only on $X$, such that
\[
\frac{C^2}{(D \cdot C)^2} \geq -B_w(X)
\]
for all integral curves $C$ on $X$ and all big and nef divisors $D$ on $X$ such that $D \cdot C>0$.
\end{conjecture}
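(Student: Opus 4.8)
The plan is to first discard the harmless curves, then to reduce the weighted statement to ordinary bounded negativity, and finally to isolate the genuinely open part. First I would observe that only integral curves $C$ with $C^2<0$ are relevant: if $C^2\ge 0$ then $C^2/(D\cdot C)^2\ge 0>-B_w(X)$ for every admissible $D$, so such curves impose no constraint. It therefore suffices to find an upper bound, uniform in both $C$ and $D$, for $|C^2|/(D\cdot C)^2$ over integral curves with $C^2<0$ and big and nef divisors $D$ with $D\cdot C>0$. It is worth noting that integrality of $D$ is what makes the problem non-vacuous: were $D$ allowed to be an arbitrary real nef class, the scaling $D\mapsto\lambda D$ would send the quotient to $-\infty$ as $\lambda\to 0^{+}$, so any bound must exploit that $D$ is Cartier.

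Next I would use exactly this integrality. Since $D$ is a big and nef, hence Cartier, divisor and $C$ is an integral curve, the number $D\cdot C=\deg\big(\mathcal{O}_X(D)|_C\big)$ is a non-negative integer, and $D\cdot C>0$ forces $D\cdot C\ge 1$; consequently
\[
\frac{|C^2|}{(D\cdot C)^2}\le |C^2|.
\]
Thus on any surface for which the ordinary BNC holds, say $C^2\ge -B(X)$, the weighted statement follows immediately with $B_w(X)=B(X)$. This already settles the conjecture for the classes of surfaces recalled in the introduction: when $K_X$ is $\mathbb{Q}$-effective \cite{MR2608654}, when $X$ carries a non-isomorphic surjective endomorphism \cite{MR3079262}, when every pseudo-effective divisor admits an integral Zariski decomposition \cite{MR3731330}, or when $X$ has the bounded cohomology property \cite{MR3694974}.

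To reach surfaces where BNC is still open the aim is to prove the sharper quadratic bound $(D\cdot C)^2\ge c\,|C^2|$ for a constant $c=c(X)>0$ independent of $C$ and $D$, which would give $B_w(X)=1/c$ with no appeal to BNC. Here I would stratify the admissible divisors by their position in $\Nef(X)$. Normalising by a fixed ample class $A$ so that $A\cdot D=1$, the normalised divisors fill a compact slice of the nef cone; on a compact subset $K$ of the interior of this slice there is $c_K>0$ with $D-c_K A$ nef for every $D\in K$, so $D\cdot C\ge c_K\,(A\cdot C)$ for every curve $C$. Combined with a single-divisor estimate $|C^2|\le c'\,(A\cdot C)^2$ — an instance of the asymptotic approach of \cite{MR4631420}, and the mechanism behind the partial results of the present paper — this yields a uniform bound over $K$. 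The difficulty is thereby pushed entirely onto divisors that lie on, or converge to, the boundary $\partial\,\Nef(X)$, and more precisely onto the supporting wall $C^{\perp}$ of the curve being tested, where $D\cdot C$ can be as small as $1$ while $|C^2|$ is arbitrarily large.

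The hard part will be precisely this boundary analysis, and it is where the conjecture remains open. Establishing $(D\cdot C)^2\ge c\,|C^2|$ uniformly is equivalent to showing that the minimal positive value $\min\{\,D\cdot C : D\ \text{nef integral}\,\}$ grows at least like $\sqrt{|C^2|}$ as $|C^2|\to\infty$; that is, integral curves of unboundedly negative self-intersection must not admit a nef lattice class meeting them in a bounded number of points. For surfaces whose nef cone is rational polyhedral — for instance Mori dream surfaces, among them many blowups of $\mathbb{P}^2$ and of Hirzebruch surfaces — the boundary has finitely many extremal rays, and one may hope to reduce the estimate to those finitely many supporting classes; but a ray spanned by a class $N$ with $N^2=0$ may still be met by infinitely many negative curves, and bounding $|C^2|$ against $(N\cdot C)^2$ along such a null ray is exactly what the general BNC would supply. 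Since no unconditional bound is available here for an arbitrary surface, this last step — a single constant $B_w(X)$ valid for all big and nef $D$, including those approaching $\partial\,\Nef(X)$ — is the true obstruction, and it is at this point that the surface-specific arguments of this paper, valid for pull-back classes and for $D$ confined to a compact subset of the interior of $\Nef(X)$, represent the progress currently within reach.
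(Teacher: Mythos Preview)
The statement you address is an open \emph{conjecture}; the paper does not prove it, so there is no proof to compare against. Your write-up is not a proof either, and you acknowledge this explicitly. Your one genuine argument --- that $D\cdot C\ge 1$ for an integral big and nef divisor $D$ with $D\cdot C>0$, whence $C^2/(D\cdot C)^2\ge C^2\ge -B(X)$ --- is correct and shows that BNC implies WBNC with $B_w(X)=B(X)$. But BNC is itself open, so this reduction settles WBNC only on the classes of surfaces where BNC is already known; for a general smooth projective surface it leaves the conjecture untouched.

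Your informal boundary analysis does, however, match the shape of what the paper actually proves. Theorems~\ref{b1} and~\ref{t2p} (summarised as Theorem~B in the introduction) establish, for rational surfaces, that the quotients $C^2/(D\cdot C)^2$ are uniformly bounded below provided $D$ lies in $\Delta_{H^*}(\mathcal{C},\epsilon)$ for some pullback $H^*$ of a nef class on $\mathbb{P}^2$ or $\mathbb{F}_\delta$, with the bound degrading like $1/\epsilon^2$ as $\epsilon\to 0$. This is precisely your ``compact subset of the interior of the nef slice'' mechanism, made rigorous in a restricted setting. The step you correctly flag as the obstruction --- a bound that survives all the way to $\partial\,\Nef(X)$, or equivalently control of $(D\cdot C)^2$ against $|C^2|$ when $D$ approaches the wall $C^\perp$ --- is not supplied by your proposal and is not claimed by the paper either.
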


This conjecture was stated in \cite[Conjecture 3.7.1]{MR2976940}. In Proposition 3.7.2 of \cite{MR2976940}, it is proved that the trueness of the WBNC is enough for the quantity given in (\ref{ses}) to be always positive. Some evidence for this conjecture can be found in \cite{MR4159801}, where the authors consider surfaces obtained by blowing-up mutually distinct points on complex surfaces of non-negative Kodaira dimension or on the complex projective plane. A partial improvement of the main result in \cite{MR4159801} for  this last case can be found in \cite{MR4737133}.

In this paper we are concerned with the WBNC conjecture for any smooth projective rational surface. Our results hold for rational surfaces over any algebraically closed field.

Our {\it first main result} is the following one. It summarizes the future Theorem \ref{t1} and Remarks \ref{not} and \ref{nuevarem} giving a simpler and slightly less accurate version of them.

\begin{introtheorem}
\label{elA}
Let $Z$ be any smooth projective rational surface over an algebraically closed field $k$. Set $\pi: Z \rightarrow Z_0$ the composition of a finite sequence of blowups centered at a configuration of infinitely near points $\mathcal{C}=\{p_i\}_{i=1}^N$ giving rise to $Z$, where $Z_0$ is either the projective plane $\mathbb{P}^2$, or a Hirzebruch surface $\mathbb{F}_\delta$, both over $k$.

Then, there exists a positive integer $A(Z)$, depending only on $ \mathcal{C}$,  satisfying
\begin{equation}
\label{teo1}
\frac{C^2}{(H^* \cdot C)^2} \geq -A(Z)
\end{equation}
for all nef divisors $H$ on $Z_0$ and for all integral curves $C$ on $Z$ such that $H^* \cdot C>0$; here $H^*$ stands for the pullback of $H$ by $\pi$.
In addition, the integer $A(Z)$ can be effectively computed.\\


Furthermore, when $Z_0 = \mathbb{P}^2$, if $C$ is an integral curve on $\mathbb{P}^2$ which is not a line passing through $p_1$, then
$$\tilde{C}^2\geq -A(Z)\left[ \deg(C)-{\rm mult}_{p_1}(C)\right]^2,$$
where $\tilde{C}$ denotes the strict transform of $C$ on $Z$, $\deg(C)$ is the degree of $C$ and ${\rm mult}_{p_1}(C)$ denotes the multiplicity of $C$ at $p_1$.

\end{introtheorem}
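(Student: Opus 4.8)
The plan is to deduce the last assertion from the first one, applied not to the given presentation $\pi\colon Z\to\mathbb{P}^2$ but to a presentation of $Z$ over a Hirzebruch surface obtained by blowing up $p_1$ first. Since $p_1$ is a proper point of $\mathbb{P}^2$, factor $\pi=\sigma\circ\pi'$, where $\sigma\colon X_1\to\mathbb{P}^2$ is the blowup of $\mathbb{P}^2$ at $p_1$ --- so that $X_1\cong\mathbb{F}_1$ --- and $\pi'\colon Z\to X_1$ is the composition of the remaining blowups, now centred at the configuration $\mathcal{C}':=\{p_i\}_{i=2}^{N}$ of infinitely near points over $\mathbb{F}_1$ (each $p_i$, $i\ge 2$, is a proper point of $\mathbb{F}_1$ or lies infinitely near an earlier one, so $\mathcal{C}'$ is indeed a legitimate configuration and $\pi'$ is the blowup at it). Write $\ell$ for the pullback to $X_1$ of a line of $\mathbb{P}^2$ and $e_1$ for the exceptional curve of $\sigma$; then $f:=\ell-e_1$ is the class of a fiber of the ruling $\mathbb{F}_1\to\mathbb{P}^1$, so $f$ is nef (it is the pullback of an ample class on $\mathbb{P}^1$) and $f^2=0$. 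Its total transform on $Z$ is $\pi'^*f=H^*-E_1^*$, where $H^*$ and $E_1^*$ denote the total transforms of a line of $\mathbb{P}^2$ and of the first exceptional divisor.

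Next I would record, for an arbitrary integral plane curve $C$ with strict transform $\tilde C$ on $Z$, the standard identities $H^*\cdot\tilde C=\deg(C)$ and $E_1^*\cdot\tilde C=\mathrm{mult}_{p_1}(C)$, which together give $(\pi'^*f)\cdot\tilde C=\deg(C)-\mathrm{mult}_{p_1}(C)$. One then checks that this integer is strictly positive under the hypotheses of the statement: if $\mathrm{mult}_{p}(C)=\deg(C)$ at some point $p$, then $C$ coincides with its tangent cone at $p$, so $C$ is a union of lines through $p$; being integral, $C$ is then a single line through $p$. Hence, if $C$ is integral and is not a line through $p_1$, we have $\deg(C)-\mathrm{mult}_{p_1}(C)\ge 1>0$.

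Finally, I would apply the already-established inequality \eqref{teo1} (that is, Theorem \ref{t1}) to the presentation $\pi'\colon Z\to\mathbb{F}_1$ and to the nef divisor $H=f$; this is permitted precisely because $(\pi'^*f)\cdot\tilde C>0$, and it produces a positive integer --- say $A'$, depending only on $\mathcal{C}'$ and hence only on $\mathcal{C}$, and effectively computable --- with $\tilde C^2\ge -A'\bigl(\deg(C)-\mathrm{mult}_{p_1}(C)\bigr)^2$. Taking $A(Z)$ to be the maximum of $A'$ and of the constant furnished by the first part of the theorem for the $\mathbb{P}^2$-presentation yields a single positive integer, still depending only on $\mathcal{C}$ and still effectively computable, for which both assertions hold simultaneously. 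The whole argument is short because all the work sits in Theorem \ref{t1}; the only genuine choice is that of the intermediate surface $\mathbb{F}_1$, and the only mild care needed is (i) that Theorem \ref{t1} be available for merely nef (not necessarily big) divisors such as $f$, which is how it is stated, and (ii) the bookkeeping guaranteeing that one constant $A(Z)$, depending only on $\mathcal{C}$, serves both statements.
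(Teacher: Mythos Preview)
Your proposal is correct and follows essentially the same route as the paper. The paper likewise factors $\pi_{\mathcal{C}_{\mathbb{P}^2}}$ through the blowup at $p_1$ to obtain $Z\to\mathbb{F}_1$, applies Theorem~\ref{t1} to this Hirzebruch presentation, and then (Remark~\ref{nuevarem}) specialises to the nef class $L^*-E_{p_1}^*$, i.e.\ the pullback of a fibre $F^1$, to obtain the bound $\tilde{C}^2\geq -(\mathbf{a}-1)[\deg(C)-\mathrm{mult}_{p_1}(C)]^2$. The only cosmetic difference is that the paper uses a single constant throughout (since even the first assertion for $Z_0=\mathbb{P}^2$ is obtained via the $\mathbb{F}_1$ presentation, cf.\ Remark~\ref{not}), whereas you take a maximum of two constants; this is harmless.
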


The computation of the value $A(Z)$ depends only on the intersection matrix of a finite set of curves on $Z$. This set includes the strict transforms of the exceptional divisors created by $\pi$. The so-called {\it arrowed proximity graph} of $\pi$, introduced in Subsection \ref{23}, is a close object to the dual graph of $\pi$ which makes easy the computation of $A(Z)$. In our last subsection, Subsection \ref{32}, we include two examples of the computation of $A(Z)$ for some specific surfaces. Our last example in that subsection proves that we are able to improve bounds previously obtained for less general cases.

Our second main result is a consequence of Theorem \ref{elA}. It considers a smooth projective rational surface $Z$ and proves that the quotients $\frac{C^2}{(D \cdot C)^2}$, $D$ being a nef divisor on $Z$ and $C$ an integral curve on $Z$ such that $D \cdot C >0$, could tend to minus infinity only when the classes of the divisors $D$ get close to the boundary of the nef cone of $Z$.

As above, let $Z$ be a  smooth rational surface given by a composition of blowups $\pi: Z \rightarrow Z_0$  defined by blowing-up a configuration of infinitely near points $\mathcal{C}$ over $Z_0$. Fix a non-zero divisor $G$ on $Z$ and let $\epsilon$ be a positive real number. Define
\begin{multline}
\label{eldelta}
\Delta_G (\mathcal{C}, \epsilon) := \left\{ D \mbox{ divisor on } Z \; | \; D \mbox { is nef and } \right.\\
\left. (D - \epsilon G) \cdot C \geq 0 \mbox{ for all integral curve $C$ on $Z$ such that $D \cdot C > 0$} \right\}.
\end{multline}
Then, the previously announced {\it second main result}, which condenses the results in the forthcoming Theorems \ref{b1} and \ref{t2p}, is the following one:
\begin{introtheorem}
\label{elB}
Let $Z$ be a smooth projective rational surface over an algebraically closed field $k$. Let $\pi: Z \rightarrow Z_0$ as in Theorem \ref{elA}, where $Z_0$ is either $\mathbb{P}^2$ or $\mathbb{F}_\delta$. Let $\epsilon$ be a positive real number and consider the sets $\mathcal{N}$ of non-zero nef divisors on $Z_0$, and $\mathcal{D} := \cup_{H \in \mathcal{N}} \Delta_{H^*} (\mathcal{C}, \epsilon)$. Then, there exists a positive integer $A'(Z)$, depending only on $ \mathcal{C}$, such that
\[
\frac{C^2}{(D \cdot C)^2} \geq \frac{-A'(Z)}{\epsilon^2},
\]
for any divisor $D$ in $\mathcal{D}$ and for any integral curve $C$ on $Z$ such that $D \cdot C>0$.
\end{introtheorem}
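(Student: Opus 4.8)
The idea is to reduce the statement about an arbitrary $D \in \mathcal{D}$ to the already-proved bound of Theorem~\ref{elA} for pullbacks $H^*$. Fix $D \in \mathcal{D}$, so that $D \in \Delta_{H^*}(\mathcal{C},\epsilon)$ for some non-zero nef divisor $H$ on $Z_0$, and fix an integral curve $C$ on $Z$ with $D\cdot C>0$. The defining inequality of $\Delta_{H^*}(\mathcal{C},\epsilon)$ in \eqref{eldelta} gives $(D-\epsilon H^*)\cdot C\ge 0$, that is, $D\cdot C\ge \epsilon\, (H^*\cdot C)$. I would first dispose of the case $H^*\cdot C\le 0$: since $H^*$ is the pullback of a nef divisor it is nef, so $H^*\cdot C=0$ in that case, and then $D\cdot C\ge 0$ forces nothing extra; but here the relevant sub-case is handled by applying Theorem~\ref{elA} only when $H^*\cdot C>0$. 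So assume $H^*\cdot C>0$. Then $D\cdot C\ge \epsilon (H^*\cdot C)>0$, hence
\[
\frac{1}{(D\cdot C)^2}\ \ge\ \frac{1}{\epsilon^2 (H^*\cdot C)^2}\qquad\text{when } C^2<0,
\]
and combining with $C^2\ge -A(Z)(H^*\cdot C)^2$ from Theorem~\ref{elA} yields
\[
\frac{C^2}{(D\cdot C)^2}\ \ge\ \frac{C^2}{\epsilon^2(H^*\cdot C)^2}\ \ge\ \frac{-A(Z)}{\epsilon^2}.
\]
If $C^2\ge 0$ the quotient is non-negative and the bound is trivially satisfied.

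The only remaining case is $H^*\cdot C=0$ with $D\cdot C>0$. Here $C$ is contracted by $\pi$ relative to $H$, i.e.\ $C$ lies in the union of the exceptional locus together with the components contracted by $H$ on $Z_0$; in particular $C$ is one of a finite list of curves (the irreducible components of the total transform of the relevant curves, and the strict transforms of the $\pi$-exceptional divisors). For these finitely many curves $C^2$ takes only finitely many values, so one may enlarge $A'(Z)$ — which is allowed to depend on $\mathcal{C}$ — to absorb them, using again $D\cdot C\ge 1$ for any integral $C$ with $D\cdot C>0$ since intersection numbers are integers. I expect this bookkeeping over the finite exceptional list to be the main technical point: one must check that the set of curves with $H^*\cdot C=0$ for \emph{some} nef $H$ on $Z_0$ is genuinely finite and that their self-intersections are bounded below by a quantity depending only on $\mathcal{C}$ (this is where the intersection matrix of the arrowed proximity graph, as in Theorem~\ref{elA}, does the work).

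Finally I would set $A'(Z)$ to be the maximum of $A(Z)$ and the finitely many values $\lceil -C^2\rceil^+$ arising in the exceptional case, and note that all the quantities entering this maximum are effectively computable from $\mathcal{C}$, as asserted. Assembling the three cases gives $\tfrac{C^2}{(D\cdot C)^2}\ge -A'(Z)/\epsilon^2$ for every $D\in\mathcal{D}$ and every integral $C$ with $D\cdot C>0$, which is the claim. The reference to Theorems~\ref{b1} and~\ref{t2p} in the introduction suggests the split $Z_0=\mathbb{P}^2$ versus $Z_0=\mathbb{F}_\delta$ is handled separately in the body; the argument above is uniform, with the $\mathbb{F}_\delta$ case differing only in the explicit description of the contracted curves and hence in the constant.
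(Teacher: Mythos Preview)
Your approach is essentially the paper's: the core of the argument is exactly the chain
\[
\frac{C^2}{(D\cdot C)^2}=\frac{C^2}{[(D-\epsilon H^*)\cdot C+\epsilon H^*\cdot C]^2}\ge\frac{C^2}{\epsilon^2(H^*\cdot C)^2}\ge -\frac{A(Z)}{\epsilon^2},
\]
which is what the paper does in the proof of Theorem~\ref{b1} (invoking Lemmas~\ref{a1}--\ref{a2} in place of Theorem~\ref{elA}), together with a separate treatment of the finitely many special curves. The paper splits according to the \emph{type} of $C$ (exceptional, strict transform of a fiber or the special section, versus everything else) rather than according to whether $H^*\cdot C=0$, but the two case divisions coincide up to finitely many curves and your version is arguably cleaner.

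One point deserves a warning, and it concerns the statement of Theorem~\ref{elB} itself rather than your strategy. In your residual case $H^*\cdot C=0$ you obtain $C^2/(D\cdot C)^2\ge C^2\ge -B(\mathcal{C})$ using $D\cdot C\ge 1$, and then propose to ``absorb'' this into $-A'(Z)/\epsilon^2$ by enlarging $A'(Z)$. But $-B(\mathcal{C})\ge -A'(Z)/\epsilon^2$ fails for large $\epsilon$ if $A'(Z)$ must be independent of $\epsilon$. The paper avoids this by stating the sharper bound $\min\{\alpha,\beta,-\omega(\mathcal{C}_\delta)/\epsilon^2\}$ in Theorems~\ref{b1} and~\ref{t2p}, which is \emph{not} uniformly of the shape $-A'/\epsilon^2$, and then remarks that Theorem~\ref{elB} is ``a slightly less accurate version'' of these. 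So your reduction to Theorem~\ref{elA} is correct and matches the paper exactly; the packaging into a single constant over $\epsilon^2$ is only literally valid in the regime $\epsilon\le 1$ (the one of interest for the heuristic about approaching the boundary of the nef cone), and this looseness is already present in the paper's passage from Theorems~\ref{b1}/\ref{t2p} to the introduction.
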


Section \ref{2} of the paper contains some preliminaries on Hirzebruch surfaces, configurations of infinitely near points and a combinatorial object, named the arrowed proximity graph (of a configuration of infinitely near points), which will help to present the results in the paper. This section also contains Theorems \ref{fg} and \ref{casos_particulares} which are important in our proofs.
The main results on WBNC are given in Section \ref{3}. Theorems \ref{t1}, \ref{b1}, \ref{t1p},  and \ref{t2p} are the main results we state and prove in Subsection \ref{31}, while Subsection \ref{32} offers two illustrative examples showing how to compute our bounds.

\section{Preliminaries}
\label{2}

In this section we recall some known concepts and facts that will be useful. Our first two subsections are very succinct reviews of Hirzebruch surfaces and configurations of infinitely near points over smooth projective surfaces. Subsection \ref{23} contains Theorems \ref{fg} and \ref{casos_particulares}, proved in \cite{cono_hirzebruch}, which will help to prove our main theorems. Throughout all the paper, $k$ will denote an algebraically closed field.

\subsection{Hirzebruch surfaces}\label{hirzebruch}\label{21}

Denote by $\mathbb{Z}_{\geq 0}$ the set of non-negative integers. Let $\mathbb{F}_\delta$ be the $\delta$th Hirzebruch surface over $k$, $\delta \in \mathbb{Z}_{\geq 0}$. Thus, $\mathbb{F}_\delta=\mathbb{P}({\mathcal O}_{\mathbb{P}^1}\oplus {\mathcal O}_{\mathbb{P}^1}(\delta))$ or, equivalently, the toric surface associated to the fan $\Sigma_\delta \subseteq \mathbb{Z}^2$ whose rays are spanned by the vectors $(1,0), (0,1), (-1,\delta)$ and $(0,-1)$ (see \cite{MR1234037}). The homogeneous coordinate ring \cite{MR1299003} of $\mathbb{F}_\delta$ is the graded ring $S_\delta:=k[X_0,X_1,Y_0,Y_1]$, where the variables have the following gradings: $\deg(X_0)=\deg(X_1)=(1,0)$, $\deg(Y_0)=(0,1)$ and $\deg(Y_1)=(-\delta,1)$. Then,
in terms of coordinates, the (closed) points in $\mathbb{F}_\delta$ can be regarded as the set of orbits of the action of the algebraic torus $k^*\times k^*$ over $(k^2\setminus \{(0,0)\})\times (k^2\setminus \{(0,0)\}) $ defined by
$$(\mathfrak{a}, \mathfrak{b})\cdot (X_0,X_1; Y_0,Y_1):=(\mathfrak{a} X_0, \mathfrak{a} X_1; \mathfrak{b} Y_0, \mathfrak{a}^{-\delta}\mathfrak{b} Y_1).$$
For each $(a,b)\in \mathbb{Z}^2$, the $(a,b)$-graded part of $S_\delta$ is
$$S_{\delta}(a,b):=\bigoplus_{\alpha+\beta-\delta \eta=a;\; \gamma+\eta=b} k \;X_0^{\alpha} X_1^{\beta} Y_0^{\gamma}Y_1^{\eta}.$$

A polynomial in $S_{\delta}$ is called \emph{$\delta$-bihomogeneous} if it belongs to one of the graded parts of $S_\delta$. Any curve $C$ on $\mathbb{F}_\delta$ is given by the zero set $V(G)$ of a polynomial $G(X_0,X_1,Y_0,Y_1)\in S_{\delta}(a,b)$ for some suitable pair $(a,b)\in \mathbb{Z}^2$ and, many times, $C$ will be denoted by $C_G$ to make reference to its defining polynomial. The pair $(a,b)$ is called  the \emph{$\delta$-bidegree} of $C_G$ (and also of $G$).

The projection $\sigma_{\delta}:\mathbb{F}_\delta\rightarrow \mathbb{P}^1 := \mathbb{P}^1_k$ that maps $(X_0,X_1; Y_0,Y_1)$ to $(X_0:X_1)$ gives $\mathbb{F}_\delta$ the structure of ruled surface. The fibers of $\sigma_{\delta}$ are smooth rational curves usually called \emph{fibers of $\mathbb{F}_\delta$}. We denote by $F^{\delta}$ a general fiber of $\mathbb{F}_\delta$ and by $M^\delta$ (respectively, $M_0^\delta$) the section of $\sigma_\delta$ given by $V(Y_0)$ (respectively, $V(Y_1)$). Notice that $(F^\delta)^2=0$, $(M^\delta)^2=\delta$, $(M_0^\delta)^2=-\delta$, $M^\delta \cdot M_0^\delta=0$, $F^\delta\cdot M^\delta=1$ and $F^\delta\cdot M_0^\delta=1$. The curve $M_0^\delta$ is named the \emph{special section} of $\mathbb{F}_\delta$. When $\delta \geq 1$, $M_0^\delta$ is the unique integral (i.e., reduced and irreducible) curve on $\mathbb{F}_\delta$ with negative self-intersection. Moreover, the linear equivalence classes of $F^\delta$ and $M^\delta$ provide a basis of the Picard group of $\mathbb{F}_\delta$  (see \cite[Section V.2]{MR463157}).\medskip

The surface $\mathbb{F}_\delta$ is covered by four affine open subsets, $U_{ij}^\delta:=\mathbb{F}_\delta\setminus V(X_iY_j)$, $i,j\in \{0,1\}$. Let us fix two indeterminates $x,y$ and let us consider the affine plane $V:={\rm Spec}(k[x,y])$. $V$ can be identified with the open subset $U_{00}^\delta$ by means of the isomorphism
$$\iota_\delta: V\rightarrow U_{00}^\delta$$
given by $(x,y)\mapsto (1,x; 1,y)$, with inverse
$$\iota_{\delta}^{-1}: U_{00}^\delta\rightarrow V$$
defined by $(X_0,X_1; Y_0,Y_1)\mapsto (\frac{X_1}{X_0}, \frac{X_0^\delta Y_1}{Y_0})$.

For our purposes, it will be convenient to consider $V$ under the above identification, that is, as a common open subset of all Hirzebruch surfaces $\mathbb{F}_\delta$, $\delta \in \mathbb{Z}_{\geq 0}$. In other words, we will consider the Hirzebruch surfaces $\mathbb{F}_\delta$ as different compactifications of the affine plane $V$. To emphasize this point of view, given a curve $C$ on any Hirzebruch surface $\mathbb{F}_\delta$, we denote its restriction to $U_{00}^\delta$ by $C\mid_{V}$.


\subsection{Configurations of infinitely near points}\label{22}

Let $X$ be a smooth projective surface.  A \emph{configuration} of infinitely near points over $X$ (\emph{configuration over} $X$ for short) is a non-empty set $\mathcal{C}=\left\{p_i \right\}_{i=1}^N$ such that $p_1$ is a closed point of $X_0:=X$ and, for $1 \leq i \leq N$, $p_{i+1}$ is a closed point of $X_i$ where  $X_i :=\mathrm{Bl}_{p_i}\left(X_{i-1}\right) \xrightarrow{\pi_i} X_{i-1}$ denotes the blowup of $X_{i-1}$ centered at $p_{i}$. We stand $\pi_{\mathcal{C}}$ for the composition of the sequence of blowups at the points of $\mathcal{C}$, $\pi_{\mathcal{C}}:=\pi_1 \circ \cdots \circ \pi_N: X_{\mathcal{C}}:=X_N \rightarrow X_0$. For each point $p=p_i\in {\mathcal C}$, $E_p$ represents the exceptional divisor produced by the blowup $\pi_i$.

Given $i, i^{\prime} \in\{1, \ldots, N\}$, a point $p_{i^{\prime}}$ is infinitely near $p_i$ (denoted $p_{i^{\prime}} \geq p_i$) if $p_{i^{\prime}}=p_i$ or $p_i$ is the image of $p_{i^{\prime}}$ under the composition of blowups  $X_{i'-1}\rightarrow X_{i-1}$ giving rise to $p_{i^{\prime}}$. This defines a partial ordering on $\mathcal{C}$. A point $p_i \in \mathcal{C}$ is said to be \emph{maximal} if it is maximal with respect to $\geq$. When $\geq$ is a total ordering, $\mathcal{C}$ is called a {\it chain}. For any configuration $\mathcal{C}$ and any point $q \in \mathcal{C}$, we define the chain $\mathcal{C}^q:=\{p \in \mathcal{C} \mid q \geq p\}$.  The number of points in $\mathcal{C}^q$ different from $q$ is called \emph{level} of $q$ and denoted $\ell(q)$. The points of level $0$ are those in $\mathcal{C}\cap X$. It is clear that $\mathcal{C}=\cup_q \mathcal{C}^q$, where $q$ runs over the set of maximal points of $\mathcal{C}$.

We say that $p_{i^{\prime}}$ is \emph{proximate} to $p_i, 1 \leq i, i^{\prime} \leq N$, denoted $p_{i^{\prime}} \rightarrow p_i$, whenever $p_{i^{\prime}}$ belongs to the strict transform of the exceptional divisor $E_{p_i}$ on $X_{i^{\prime}-1}$. A point in $\mathcal{C}$ is \emph{satellite} if it is proximate to other two points in $\mathcal{C}$; otherwise, it is \emph{free}.

For any divisor $D$ on $X_i$, $0\leq i\leq N-1$, $D^*$ (respectively, $\tilde{D}$) stands for the total transform -or pullback- (respectively, strict transform) of $D$ on $X_N$ by the corresponding composition of blowups. \medskip

As we announced before, our next subsection recalls some results on the polyhedrality of the effective cone of rational surfaces obtained by blowing-up a configuration over a Hirzebruch surface. We also introduce a useful tool, the arrowed proximity graph of a configuration of infinitely near points.

\subsection{Finite generation of the effective cone} \label{23}

Consider the Hirzebruch surface $\mathbb{F}_\delta$, $\delta \in \mathbb{Z}_{\geq 0}$. Let $\mathcal{C}_\delta=\{p_i\}_{i=1}^N$ be a configuration over $\mathbb{F}_\delta$ and $\pi_{\mathcal{C}_\delta}: Z_{\mathcal{C}_\delta}\rightarrow Z_0:=\mathbb{F}_\delta$ the corresponding composition of blowups. For each $p\in \mathcal{C}_\delta$, $E_p^\delta$ denotes the exceptional divisor created by the blowup centered at $p$. Let $\{F_1^{\delta},\ldots,F_r^{\delta}\}$ be the set of fibers of $\mathbb{F}_\delta$ going through some point in $\mathcal{C}_\delta\cap \mathbb{F}_\delta$.

The \emph{arrowed proximity graph} of the pair $(Z_{\mathcal{C}_\delta},\pi_{\mathcal{C}_\delta})$, denoted ${\rm APG}(Z_{\mathcal{C}_\delta},\pi_{\mathcal{C}_\delta})$, is a labeled graph defined as follows: Their vertices correspond to the points in ${\mathcal{C}_\delta}$ and are labeled with the symbols $p_i$, $1\leq i\leq N$. In addition, two vertices corresponding to points $p,q\in \mathcal{C}$ are joined by an edge if $q\geq p$ and $\ell(q)=\ell(p)+1$. We also add edges joining the vertices labeled $p$ and $q$ in $\mathcal{C}_\delta$ whenever $q \rightarrow p$ and $\ell(q)>\ell(p)+1$. For simplicity, when depicting the graph, we omit those edges one can deduce from others (see \cite[Theorem 1.6]{MR1405323}). We complete our graph by adding some arrows with labels either $\tilde{F}^\delta_j$, $1\leq j\leq r$, or $\tilde{M}^\delta_0$. These arrows are added to the vertices of the graph corresponding to the last points in $\mathcal{C}_\delta$ (with respect to the ordering $\geq$) through which the strict transforms of the fibers $F^\delta_j$ or the special section $M^\delta_0$ pass. Notice that the graph $\operatorname{APG}(Z_{\mathcal{C}_\delta}, \pi_{\mathcal{C}_\delta})$ depends only on the relative positions (intersections) of the strict transforms on $Z_{\mathcal{C}_\delta}$ of the exceptional divisors, the fibers $F^\delta_j$ and the special section $M^\delta_0$.

Any pair $(X,\pi)$, where $\pi: X = X_\mathcal{C} \rightarrow X_0$ is the composition of blowups centered at a configuration $\mathcal{C}$ over a smooth surface $X_0$, admits a {\it proximity graph}, which is defined as above but without adding arrows.




\begin{example}\label{ex1}

Figure \ref{fig1} depicts the arrowed proximity graph ${\rm APG}(Z_{\mathcal{C}_\delta},\pi_{\mathcal{C}_\delta})$ of a configuration $\mathcal{C}_\delta=\{p_i\}_{i=1}^{20}$ over a Hirzebruch surface $\mathbb{F}_\delta$ satisfying the following conditions:

\begin{multline*}
p_{i+1}\rightarrow p_i \mbox{ for } i\in \{1,2,3,4,5,6\}\cup \{8\}\cup \{10,11,12\}\cup \{14,15,16\}\cup \{18,19\},\\  p_6 \rightarrow p_4, p_7 \rightarrow p_4, p_8 \rightarrow p_2, p_8 \rightarrow p_3, p_{13} \rightarrow p_{11},\\  p_{14}\rightarrow p_{10}, p_{16}\rightarrow p_{14}, p_{17}\rightarrow p_{15} \mbox{ and  } p_{20}\rightarrow p_{18}.
\end{multline*}

Moreover the fiber $F_1^\delta$ passes through the point $p_1$ but its strict transform does not pass through $p_2$, the successive strict transforms of the fiber $F_2^\delta$ (respectively, $F_3^\delta$) pass through $p_{10}$ and $p_{11}$ (respectively, $p_{18}$ and $p_{19}$), and the successive strict transforms of the special section $M_0^\delta$ pass through $p_{10}, p_{14}$ and $p_{15}$.
The maximal points of the configuration are $p_7$, $p_9$, $p_{13}$, $p_{17}$ and $p_{20}$. We suppose that the fibers $F_1^\delta$, $F_2^\delta$ and $F_3^\delta$ are different.

\end{example}

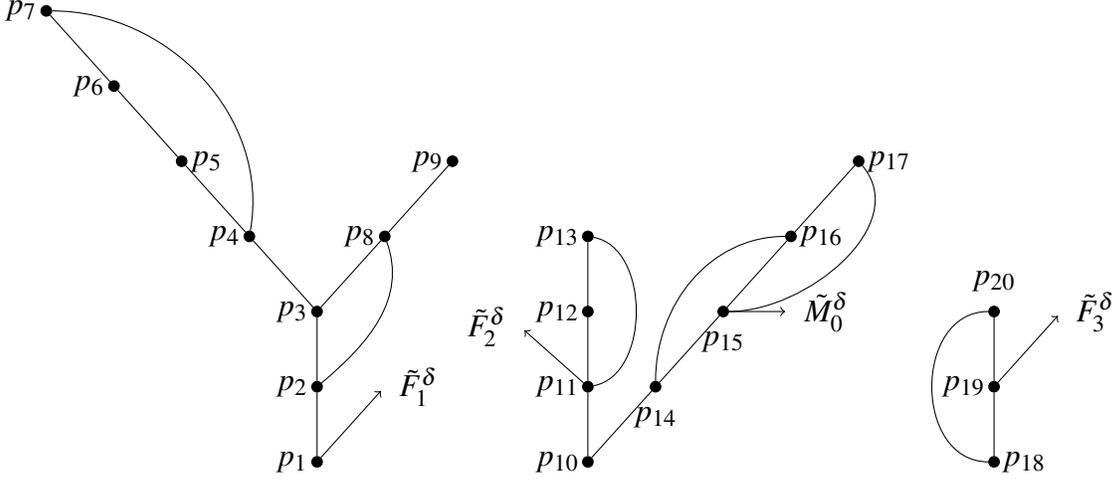
\begin{figure}[htbp]

\begin{center}
\begin{tikzpicture}[x=0.9cm,y=1cm]
    \tikzstyle{every node}=[draw,circle,fill=black,minimum size=4pt, inner sep=0pt]

\draw (0,0) node (p1) [label=left:$p_1$]{};
\draw (0,1) node (p2) [label=left:$p_2$]{};
\draw (0,2) node (p3) [label=left:$p_3$]{};

\draw (1,3) node (p8) [label=left:$p_8$]{};
\draw (2,4) node (p9) [label=left:$p_9$]{};

\draw (-1,3) node (p4) [label=left:$p_4$]{};
\draw (-2,4) node (p5) [label=right:$p_5$]{};
\draw (-3,5) node (p6) [label=left:$p_6$]{};
\draw (-4,6) node (p7) [label=left:$p_7$]{};

\draw (4,0) node (p10) [label=left:$p_{10}$]{};
\draw (4,1) node (p11) [label=left:$p_{11}$]{};
\draw (4,2) node (p12) [label=left:$p_{12}$]{};
\draw (4,3) node (p13) [label=left:$p_{13}$]{};

\draw (5,1) node (p14) [label=below:$p_{14}$]{};
\draw (6,2) node (p15) [label=below:$p_{15}$]{};
\draw (7,3) node (p16) [label=right:$p_{16}$]{};
\draw (8,4) node (p17) [label=right:$p_{17}$]{};

\draw (10,0) node (p18) [label=right:$p_{18}$]{};
\draw (10,1) node (p19) [label=left:$p_{19}$]{};
\draw (10,2) node (p20) [label=above:$p_{20}$]{};

\draw (1,1) node (f1) [white, label=right:$\tilde{F}_1^\delta$]{};

\draw (3,1.8) node (f2) [white, label=left:$\tilde{F}_2^\delta$]{};

\draw (7,2) node (m0) [white, label=right:$\tilde{M}_0^\delta$]{};

\draw (11,2) node (f3) [white, label=right:$\tilde{F}_3^\delta$]{};

\draw (p1) -- (p2);
\draw (p2) -- (p3);
\draw (p3) -- (p4);
\draw (p4) -- (p5);
\draw (p5) -- (p6);
\draw (p6) -- (p7);

\draw (p3) -- (p8);
\draw (p8) -- (p9);
\draw (p10) -- (p11);
\draw (p11) -- (p12);
\draw (p12) -- (p13);
\draw (p10) -- (p14);
\draw (p14) -- (p15);
\draw (p15) -- (p16);
\draw (p16) -- (p17);
\draw (p18) -- (p19);
\draw (p19) -- (p20);

\draw[->] (p1) -- (f1);
\draw[->] (p11) -- (f2);
\draw[->] (p15) -- (m0);
\draw[->] (p19) -- (f3);

 \draw (p2) to [out=40,in=-70] (p8);
  \draw (p4) to [out=80,in=0] (p7);
    \draw (p11) to [out=10,in=-10] (p13);
     \draw (p14) to [out=90,in=180] (p16);
     \draw (p15) to [out=0,in=-50] (p17);
       \draw (p18) to [out=180,in=180, distance=1cm] (p20);

 \end{tikzpicture}
\end{center}
\caption{Arrowed proximity graph in Example \ref{ex1}}\label{fig1}
\end{figure}


Denote by $\mathrm{NS}(Z_{\mathcal{C}_\delta})$ the Néron-Severi group of the surface $Z_{\mathcal{C}_\delta}$ and consider the real vector space $\mathrm{NS}_{\mathbb{R}}(Z_{\mathcal{C}_\delta}):=\mathrm{NS}(Z_{\mathcal{C}_\delta}) \otimes$ $\mathbb{R}$. As usual, the symbol $\cdot$ stands for the intersection product. Let $\mathrm{Eff}(Z_{\mathcal{C}_\delta})$ be the effective cone of $Z_{\mathcal{C}_\delta}$, that is, the convex cone in $\mathrm{NS}_{\mathbb{R}}(Z_{\mathcal{C}_\delta})$ spanned by the effective classes. The image in $\mathrm{NS}_{\mathbb{R}}(Z_{\mathcal{C}_\delta})$ of a divisor $D$ on $Z_{\mathcal{C}_\delta}$ is denoted by $[D]$.

The following result, proved in \cite{cono_hirzebruch}, states that, for $\delta$ large enough, the effective cone of the surface $Z_{\mathcal{C}_\delta}$ is (finite) polyhedral and minimally generated. For certain particular cases of surfaces $Z=Z_{\mathcal{C}_\delta}$, the literature contains some results on the polyhedrality of Eff$(Z)$ (even on the finite generation of the effective monoid of $Z$), see for instance \cite{MR4234305} and \cite{MR4658211}.

\begin{theorem}\cite[Theorem 3.14 (a)]{cono_hirzebruch} \label{fg}
Let $Z_{\mathcal{C}_\delta}$ be a rational surface which is obtained by a composition $\pi_{\mathcal{C}_{\delta}}:Z_{\mathcal{C}_\delta}\rightarrow \mathbb{F}_\delta$ of blowups centered at the points of a configuration $\mathcal{C}_\delta$ over a Hirzebruch surface $\mathbb{F}_\delta$.

Then, there exists a positive integer $\mathbf{a}:= \mathbf{a}({\rm APG}(Z_{\mathcal{C}_\delta},\pi_{\mathcal{C}_\delta}))$, which depends only on the arrowed proximity graph ${\rm APG}(Z_{\mathcal{C}_\delta},\pi_{\mathcal{C}_\delta})$, such that, if $\delta \geq \mathbf{a}({\rm APG}(Z_{\mathcal{C}_\delta},\pi_{\mathcal{C}_\delta}))$, the effective cone ${\rm Eff}(Z_{\mathcal{C}_\delta})$ is spanned by the following set:
$$S:=\{ [\tilde{E}_p^\delta]\}_{p\in \mathcal{C}_\delta}\cup \{[\tilde{F}_1^\delta],\ldots, [\tilde{F}_r^\delta], [\tilde{M}_0^\delta]\},$$
where $\{F_1^{\delta},\ldots,F_r^{\delta}\}$ is the set of fibers of $\mathbb{F}_\delta$ passing through some point in  $\mathcal{C}_\delta\cap \mathbb{F}_\delta$.
\end{theorem}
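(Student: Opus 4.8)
Write $Z:=Z_{\mathcal{C}_\delta}$, $\pi:=\pi_{\mathcal{C}_\delta}$, and let $\langle S\rangle$ be the closed convex cone generated by $S$ in $\mathrm{NS}_\mathbb{R}(Z)$. The plan is to prove the single assertion that, for $\delta$ large enough, the class of every integral curve on $Z$ lies in $\langle S\rangle$; since $\mathrm{Eff}(Z)$ is the closure of the cone spanned by classes of integral curves, this gives $\mathrm{Eff}(Z)=\langle S\rangle$. The minimality of $S$ (hence rational polyhedrality) is then formal: once $\delta\ge 1$, every element of $S$ is the class of an integral curve of negative self-intersection --- indeed $(\tilde E_p^\delta)^2=-1-\#\{q\in\mathcal{C}_\delta:q\to p\}$, $(\tilde F_j^\delta)^2\le -1$ because some point of $\mathcal{C}_\delta$ lies on $F_j^\delta$, and $(\tilde M_0^\delta)^2\le-\delta$ --- and on a projective surface the class of an irreducible negative curve spans an extremal ray of the effective cone, distinct negative curves spanning distinct rays; so no proper subset of $S$ can generate $\mathrm{Eff}(Z)$.

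The geometric input is the morphism $\psi:=\sigma_\delta\circ\pi\colon Z\to\mathbb{P}^1$, which exhibits $Z$ as a (non-relatively-minimal) ruled surface: its general fibre $F$ satisfies $F^2=0$ and $F\cdot\tilde M_0^\delta=1$, all the exceptional curves $\tilde E_p^\delta$ are $\psi$-vertical, and the special fibre $\pi^*F_j^\delta$ of $\psi$ equals, as a divisor, $\tilde F_j^\delta+\sum_q\psi_q^{(j)}\tilde E_q^\delta$, with $\psi_q^{(j)}\ge 1$ for every point $q$ lying over $F_j^\delta$, since each such curve is a component of this connected fibre. First I would treat a $\psi$-vertical integral curve $C$: it is a component of a fibre, hence one of the $\tilde F_j^\delta$ or $\tilde E_p^\delta$, or a general fibre, in which case $[C]=[F]=[\pi^*F_j^\delta]=[\tilde F_j^\delta]+\sum_q\psi_q^{(j)}[\tilde E_q^\delta]\in\langle S\rangle$; here I use $r\ge 1$, which holds since $p_1\in\mathbb{F}_\delta$.

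For a $\psi$-horizontal integral curve $C$ with $d:=C\cdot F>0$ and $C\ne\tilde M_0^\delta$, the main step is to push it down: $\bar C:=\pi(C)\subset\mathbb{F}_\delta$ is an integral curve with $C$ its strict transform and $\bar C\ne M_0^\delta$, so, using $\mathrm{Eff}(\mathbb{F}_\delta)=\langle[M_0^\delta],[F^\delta]\rangle$ and the irreducibility of $\bar C$, one gets $\bar C\equiv a\,M_0^\delta+b\,F^\delta$ with $a=\bar C\cdot F^\delta=d\ge 1$ and $b=\bar C\cdot M_0^\delta+a\delta\ge a\delta$. Now expand $[C]=\pi^*[\bar C]-\sum_p\mu_p[E_p^{\delta*}]$, with $\mu_p=\mult_p$ of the successive strict transforms of $\bar C$, in the classes of $S$: substitute $[M_0^{\delta*}]=[\tilde M_0^\delta]+\sum_p\epsilon_p[E_p^{\delta*}]$ ($\epsilon_p\in\{0,1\}$), split the $b$ available copies of $[F]=[\pi^*F_j^\delta]$ among the $r$ special fibres as $b=\sum_j b_j$, and use $[E_p^{\delta*}]=\sum_q n_{pq}[\tilde E_q^\delta]$ with $n_{pq}\in\mathbb{Z}_{\ge 0}$, $n_{pp}=1$, and $n_{pq}\ge 1$ whenever $q\ge p$. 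One obtains $[C]$ as a combination of the members of $S$ whose coefficient of $[\tilde M_0^\delta]$ is $a\ge 1$, whose coefficient of $[\tilde F_j^\delta]$ is $b_j\ge 0$, and whose coefficient of $[\tilde E_q^\delta]$ is $a\sum_p\epsilon_p n_{pq}+b_{j(q)}\psi_q^{(j(q))}-\sum_p\mu_p n_{pq}$, where $j(q)$ is the index of the special fibre lying under $q$. The decisive estimate is $\mu_p\le a$ for \emph{every} $p\in\mathcal{C}_\delta$: for $p$ of level $0$ this is $\mult_p(\bar C)\le\bar C\cdot(\text{fibre through }p)=a$, and it propagates to all infinitely near points along proximity chains via the Noether inequalities $\mu_p\ge\sum_{q\to p}\mu_q$. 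Hence $\sum_p\mu_p n_{pq}\le a\,\mathbf{k}$ with $\mathbf{k}:=\max_{q\in\mathcal{C}_\delta}\sum_p n_{pq}$ a proximity-graph invariant; since $\psi_q^{(j(q))}\ge 1$, choosing all $b_j\ge a\mathbf{k}$ --- possible as soon as $b\ge ra\mathbf{k}$, hence whenever $\delta\ge r\mathbf{k}$ --- makes every coefficient nonnegative. Thus the statement holds with $\mathbf{a}:=r\mathbf{k}$, which depends only on $\mathrm{APG}(Z,\pi)$; sharper bookkeeping (using the decreasing bounds on the $\mu_p$ along each chain and the exact values of the $\psi_q^{(j)}$) should lower this threshold.

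The hardest part is exactly this last estimate on the exceptional coefficients: one must bound the multiplicities of the strict transform of an arbitrary irreducible horizontal curve on $\mathbb{F}_\delta$ --- which is where the proximity inequalities really have to be used --- and combine these with the multiplicities by which the $\tilde E_q^\delta$ occur in the special fibres so as to produce a lower bound on $\delta$ that is visible on the arrowed proximity graph. Extracting the optimal value of $\mathbf{a}$ out of this combinatorics is the delicate step, but any explicit combinatorial bound, such as the $r\mathbf{k}$ above, already suffices for the statement.
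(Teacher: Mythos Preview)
The paper does not prove this result; it is quoted from \cite{cono_hirzebruch}, and the only indication of the argument there is Remark~\ref{algor}: one writes down an explicit finite generating set $S^\vee$ for the cone dual to $\langle S\rangle$, with coefficients read off the arrowed proximity graph, and takes $\mathbf{a}$ to be the least value of $\delta$ for which every class in $S^\vee$ has non-negative self-intersection; by \cite[Proposition~3.6 and Theorem~3.14]{cono_hirzebruch} this forces $\mathrm{Eff}(Z_{\mathcal{C}_\delta})=\langle S\rangle$.

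Your approach is different and correct. Instead of passing to the dual cone, you decompose the class of an arbitrary integral curve directly as a non-negative combination of the elements of $S$, using the fibration $\psi=\sigma_\delta\circ\pi$ over $\mathbb{P}^1$ to separate vertical and horizontal curves. For a horizontal curve $C\ne\tilde M_0^\delta$ of fibre degree $a$, the crucial estimate $\mu_p\le a$ at every point of $\mathcal{C}_\delta$---B\'ezout at level $0$, then the proximity inequalities $\mu_p\ge\sum_{q\to p}\mu_q$ propagated up the configuration---combined with $\psi_q^{(j(q))}\ge 1$ and $b\ge a\delta$ yields the explicit threshold $\delta\ge r\mathbf{k}$ with $\mathbf{k}=\max_q\sum_p n_{pq}$, an invariant of the arrowed proximity graph. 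The trade-off is that your bound is typically much larger than the sharp $\mathbf{a}$ produced by the dual-cone method (and hence than the closed formulae of Theorem~\ref{casos_particulares}), but your argument is self-contained and makes the mechanism---that for large $\delta$ the supply of fibre classes hidden in $b[F^{\delta*}]$ outpaces the exceptional corrections coming from the $\mu_p$---visible curve by curve, without analysing $S^\vee$ or the passage from non-negative self-intersection of its generators to nefness.
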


\begin{remark}\label{algor}
For any specific pair $(Z_{\mathcal{C}_\delta},\pi_{\mathcal{C}_\delta})$, the value $\mathbf{a}({\rm APG}(Z_{\mathcal{C}_\delta},\pi_{\mathcal{C}_\delta}))$ can be effectively computed only from the arrowed proximity graph ${\rm APG}(Z_{\mathcal{C}_\delta},\pi_{\mathcal{C}_\delta})$.
Indeed,  \cite[Lemma 3.2]{cono_hirzebruch} determines a finite set $S^\vee$ of generators of the dual cone of the cone generated by the set $S$ introduced in the previous theorem. These generators are given by linear combinations of classes in the set $$\{[(F^\delta)^*], [(M^\delta)^*]\}\cup \{[E^*_p]\}_{p\in \mathcal{C}_\delta}.$$ The coefficients of those combinations can be  computed from data extracted from the graph ${\rm APG}(Z_{\mathcal{C}_\delta},\pi_{\mathcal{C}_\delta})$. The inequality  $\delta \geq \mathbf{a}({\rm APG}(Z_{\mathcal{C}_\delta},\pi_{\mathcal{C}_\delta}))$ is an equivalent fact to that of all
the classes in $S^\vee$ have non-negative self-intersection
\cite[Proposition 3.6]{cono_hirzebruch}, and it implies that
${\rm Eff}(Z_{\mathcal{C}_\delta})$ is spanned by $S$ \cite[Theorem 3.14]{cono_hirzebruch}.

\end{remark}

We know two types of configurations having a simple expression for the value $$\mathbf{a}({\rm APG}(Z_{\mathcal{C}_\delta},\pi_{\mathcal{C}_\delta})).$$ We show them in the following theorem (taken from \cite[Corollary 3.7]{cono_hirzebruch}).

\begin{theorem}\label{casos_particulares}
Keep the notation as in Subsection \ref{22} and let $\mathcal{C}_\delta$, $\pi_{\mathcal{C}_\delta}$ and $Z_{\mathcal{C}_\delta}$ be as in Theorem \ref{fg}.

\begin{itemize}

\item[(a)] Suppose that the configuration $\mathcal{C}_\delta$ has only free points, no point in $\mathcal{C}_\delta\cap \mathbb{F}_\delta$ belongs to $M_0^\delta$, and any fiber of $\mathbb{F}_\delta$ passes through, at most, one point in $\mathcal{C}_\delta\cap \mathbb{F}_\delta$ and its successive strict transforms do not pass through more points in $\mathcal{C}_\delta$. Then
$$\mathbf{a}(\operatorname{APG}(Z_{\mathcal{C}_\delta}, \pi_{\mathcal{C}_\delta}))=\max \left\{\begin{array}{l l}\sum_{h=1}^s \#\mathcal{C}_{\delta}^{q_h} \left\lvert\, \begin{array}{l} q_1,\ldots,q_s  \mbox{ are different maximal points}\\ \text{of } \mathcal{C}_{\delta} \text{ such that } \left\{\mathcal{C}_{\delta}^{q_h}\right\}_{h=1}^s \text { are } \\ \text {pairwise disjoint chains whose points} \\ \text {of level $0$ belong to different fibers}\end{array}\right.\end{array}\right\},$$
where $\#\mathcal{C}_{\delta}^{q_h}$ stands for the cardinality of the chain $\mathcal{C}_{\delta}^{q_h}$.

\item[(b)] Assume that $\mathcal{C}_\delta$ has only one point of level $0$, which is denoted by $q_0$. Then
$$
\mathbf{a}(\operatorname{APG}(Z_{\mathcal{C}_\delta}, \pi_{\mathcal{C}_\delta}))=\max_q \left\{\left.\left\lceil\frac{\sum_{p\in \mathcal{C}_{\delta}^q} \operatorname{mult}_{p}\left(\varphi_{q}\right)^2-2 (\varphi_q, M^\delta_0) (\varphi_q, F_{q_0})}{(\varphi_q, F_{q_0})^2}\right\rceil^*\right. \right\},
$$
where $q$ runs over the set of maximal points of $\mathcal{C}_{\delta}$, $\varphi_q$ denotes a germ of curve at $q_0$ whose strict transform on $Z_{\mathcal{C}_\delta}$ is non-singular and transversal to $\tilde{E}_q^\delta$ at a free point, $\operatorname{mult}_{p}\left(\varphi_{q}\right)$ stands for the multiplicity of the strict transform of $\varphi_q$ at $p$ and $(\varphi_q, M^\delta_0)$ (respectively,$(\varphi_q, F_{q_0})$)  denotes the intersection multiplicity of $\varphi_q$ and the germ of $M_0^\delta$ (respectively, the fiber $F_{q_0}$ passing through $q_0$) at $q_0$. In addition, $\lceil x\rceil^*$, $x \in \mathbb{Q}$, is defined as the minimum positive integer being an upper bound of the set $\{x\}$.
\end{itemize}

\end{theorem}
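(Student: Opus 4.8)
\medskip
\noindent\textbf{Proof proposal.} The plan is to reduce the statement, via Theorem \ref{fg} and the discussion in Remark \ref{algor}, to an explicit computation of self-intersection numbers. Recall that \cite[Lemma 3.2]{cono_hirzebruch} produces a finite set $S^\vee$ of generators of the dual cone of the cone spanned by the set $S$ of Theorem \ref{fg}, that each element of $S^\vee$ is an explicit linear combination of the classes $(F^\delta)^*$, $(M^\delta)^*$ and $\{E_p^*\}_{p\in\mathcal C_\delta}$ with coefficients read off the arrowed proximity graph, and that $\delta\geq\mathbf a(\operatorname{APG}(Z_{\mathcal C_\delta},\pi_{\mathcal C_\delta}))$ is equivalent to $D^2\geq 0$ for every $D\in S^\vee$. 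Hence $\mathbf a(\operatorname{APG}(Z_{\mathcal C_\delta},\pi_{\mathcal C_\delta}))$ is precisely the least positive integer making all these squares non-negative, and I would compute it by writing down $S^\vee$ in the two special situations and then evaluating $D^2$ for $D\in S^\vee$ from the intersection numbers $(M^\delta)^2=\delta$, $(F^\delta)^2=0$, $M^\delta\cdot F^\delta=1$, $(E_p^*)^2=-1$, together with the orthogonality of the $E_p^*$ to each other and to $(F^\delta)^*$ and $(M^\delta)^*$.

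For part (a) I would first observe that the hypotheses force $\mathcal C_\delta$ to be a union of chains of free points lying over $r$ pairwise different fibers, none of them meeting $M_0^\delta$, with $\tilde F_j^\delta=(F_j^\delta)^*-E_{p_j}^*$, where $p_j$ is the unique point of $\mathcal C_\delta\cap\mathbb F_\delta$ on $F_j^\delta$. Plugging this combinatorial picture into \cite[Lemma 3.2]{cono_hirzebruch}, the generators of $S^\vee$ are the $\delta$-independent classes $(F^\delta)^*$ and $(M^\delta)^*$ (squares $0$ and $\delta$, never obstructing anything) together with, for each family $q_1,\dots,q_s$ of distinct maximal points whose chains $\mathcal C_\delta^{q_1},\dots,\mathcal C_\delta^{q_s}$ are pairwise disjoint and whose level-$0$ points lie on distinct fibers, the class
\[
(M^\delta)^*-\sum_{h=1}^s\ \sum_{p\in\mathcal C_\delta^{q_h}}E_p^*,
\]
i.e. the strict transform class of a general section of $\mathbb F_\delta$ through the chosen chains. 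A one-line computation gives that the square of this last class is $\delta-\sum_{h=1}^s\#\mathcal C_\delta^{q_h}$, so the requirement that all squares be non-negative is exactly $\delta\geq\sum_{h=1}^s\#\mathcal C_\delta^{q_h}$ for every admissible family, which is the asserted value.

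For part (b), with $\mathcal C_\delta$ rooted at its unique level-$0$ point $q_0$, the only fiber in play is $F_{q_0}$, and I expect \cite[Lemma 3.2]{cono_hirzebruch} to give, besides the harmless $\delta$-independent generators, exactly one generator $D_q$ per maximal point $q$, namely the strict transform class of the germ $\varphi_q$ in the statement,
\[
D_q=(\varphi_q,F_{q_0})\,(M^\delta)^*+(\varphi_q,M_0^\delta)\,(F^\delta)^*-\sum_{p\in\mathcal C_\delta^{q}}\mult_p(\varphi_q)\,E_p^*.
\]
The smoothness and transversality-at-a-free-point conditions on $\varphi_q$ are precisely what make $D_q$ lie in $S^\vee$, once one identifies, by Noether's formula, $(\varphi_q,F_{q_0})$ and $(\varphi_q,M_0^\delta)$ with the intersection numbers $\widetilde{\varphi_q}\cdot\tilde F_{q_0}^\delta$ and $\widetilde{\varphi_q}\cdot\tilde M_0^\delta$. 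Then $D_q^2=(\varphi_q,F_{q_0})^2\,\delta+2(\varphi_q,F_{q_0})(\varphi_q,M_0^\delta)-\sum_{p\in\mathcal C_\delta^{q}}\mult_p(\varphi_q)^2$, so $D_q^2\geq 0$ reads
\[
\delta\ \geq\ \frac{\sum_{p\in\mathcal C_\delta^{q}}\mult_p(\varphi_q)^2-2(\varphi_q,M_0^\delta)(\varphi_q,F_{q_0})}{(\varphi_q,F_{q_0})^2};
\]
maximizing over the maximal points $q$ and using that $\delta$ must be a positive integer (hence the operator $\lceil\cdot\rceil^*$) produces the stated formula.

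The main obstacle will be step one in each case: extracting from \cite[Lemma 3.2]{cono_hirzebruch} the \emph{exact and complete} list of generators of $S^\vee$, and making sure none has been overlooked --- in particular generators involving satellite points or nontrivial combinations with $\tilde F_{q_0}^\delta$ and $\tilde M_0^\delta$ --- and, in case (b), checking rigorously that the germ-theoretic class $D_q$ displayed above really is the generator attached to $q$; this is where the smoothness/transversality hypotheses on $\varphi_q$ and Noether's formula do the work. Once the generators are pinned down, the remainder is the bookkeeping of self-intersections sketched above.
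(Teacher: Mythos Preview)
The paper does not prove this theorem at all: it is quoted verbatim from \cite[Corollary 3.7]{cono_hirzebruch}, with no argument given beyond the surrounding Remark \ref{algor}. So there is no ``paper's own proof'' to compare against; the result is imported wholesale from the companion paper.

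Your sketch is a reasonable reconstruction of what the proof in \cite{cono_hirzebruch} presumably does, and it follows exactly the blueprint of Remark \ref{algor}: identify the generators of $S^\vee$ from \cite[Lemma 3.2]{cono_hirzebruch}, compute their self-intersections as affine functions of $\delta$, and read off the threshold. Your self-intersection computations are correct once the generators are granted. The honest caveat you flag --- that everything hinges on extracting the \emph{complete} list of generators of $S^\vee$ in each case from \cite[Lemma 3.2]{cono_hirzebruch} --- is indeed the entire content of the argument, and cannot be checked without that lemma in hand. In particular, in case (a) you should also verify that no generator of $S^\vee$ involves a nontrivial $(F^\delta)^*$-coefficient that could impose a stronger constraint, and in case (b) that the class $D_q$ you write down really coincides with the generator produced by \cite[Lemma 3.2]{cono_hirzebruch} (your Noether-formula justification is the right idea, but the actual matching requires the explicit description in that lemma). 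Absent access to \cite{cono_hirzebruch}, your outline is as complete as it can be.
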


\section{Weighted bounded negativity}
\label{3}

Our last section contains the main results in the paper. Subsection \ref{31} gives the statements and proofs of more detailed versions of Theorems A and B in the introduction, while Subsection \ref{32} provides two illustrative examples computing the bounds given in our results. We hope they will help the understanding of them.

\subsection{Main results}
\label{31}

We start by introducing the concept of affine configuration. We use the notation $V$ and $U^\delta_{00}$ introduced in Subsection \ref{21}.

\begin{definition}
A configuration $\mathcal{C}_\delta$ over a Hirzebruch surface $\mathbb{F}_\delta$ is said to be \emph{affine} if
 $\mathcal{C}_\delta\cap \mathbb{F}_\delta\subset U_{00}^\delta=V$.
\end{definition}

Let $\mathcal{C}_\delta$ be a configuration  over a Hirzebruch surface $\mathbb{F}_\delta$. There exists a finitely supported complete ideal sheaf $\mathcal{I}$ on $\mathbb{F}_\delta$ whose (infinitely near) base points are those in $\mathcal{C}_\delta$ (see \cite{MR1266187} and \cite{MR1405323}). If $\mathcal{C}_\delta$ is affine, $\mathcal{I}$ is supported at $V$ and, therefore,  due to the local nature of the blowup process, $\mathcal{C}_\delta$ can be regarded as the set of (infinitely near) base points of the restriction $\mathcal{I}\mid_V$;  we will denote this last configuration by $\mathcal{C}_\delta\mid_V$. In addition, since any finitely supported complete ideal on $V$ extends to a finitely supported complete ideal on $\mathbb{F}_\delta$ with the same support, any configuration over $V$ can be regarded as an affine configuration over $\mathbb{F}_\delta$. As a result of these observations, the following lemma holds.



\begin{lemma}\label{aa}
Let $\delta$ and $\delta'$ be two non-negative integers and let $\mathcal{C}_\delta$ be an affine configuration over $\mathbb{F}_\delta$. Then there exists an affine configuration $\mathcal{C}_{\delta'}$ over $\mathbb{F}_{\delta'}$ such that $\mathcal{C}_{\delta}\mid_V=\mathcal{C}_{\delta'}\mid_V$. In particular, the arrowed proximity graphs ${\rm APG}(Z_{\mathcal{C}_{\delta'}},\pi_{{\mathcal{C}}_{\delta'}})$ and ${\rm APG}(Z_{\mathcal{C}_{\delta}},\pi_{\mathcal{C}_\delta})$ coincide.

\end{lemma}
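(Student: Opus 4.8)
The plan is to transport the combinatorial data of $\mathcal{C}_\delta$ through the common affine chart $V$ and then to check that the arrowed proximity graph does not see this transport. Since $\mathcal{C}_\delta$ is affine, the discussion preceding the statement produces a configuration $\mathcal{C}_\delta\mid_V$ over $V$, namely the set of (infinitely near) base points of the restricted ideal $\mathcal{I}\mid_V$. Conversely, any finitely supported complete ideal on $V$ extends to a finitely supported complete ideal on $\mathbb{F}_{\delta'}$ with the same support, so $\mathcal{C}_\delta\mid_V$ may be regarded as an affine configuration over $\mathbb{F}_{\delta'}$; this is the configuration $\mathcal{C}_{\delta'}$ we take. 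By construction the restriction of $\mathcal{C}_{\delta'}$ to $V$ is again $\mathcal{C}_\delta\mid_V$, which gives the equality $\mathcal{C}_\delta\mid_V=\mathcal{C}_{\delta'}\mid_V$.

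For the last assertion, recall from Subsection \ref{23} that ${\rm APG}(Z_{\mathcal{C}_\delta},\pi_{\mathcal{C}_\delta})$ is determined solely by the mutual intersections on $Z_{\mathcal{C}_\delta}$ of the strict transforms of the exceptional divisors, of the fibers of $\mathbb{F}_\delta$ through the level-$0$ points of $\mathcal{C}_\delta$, and of the special section $M_0^\delta$. Because $\mathcal{C}_\delta\mid_V=\mathcal{C}_{\delta'}\mid_V$ and blowing up is local, the compositions $\pi_{\mathcal{C}_\delta}$ and $\pi_{\mathcal{C}_{\delta'}}$ restrict to one and the same sequence of blowups of $V$; hence the exceptional divisors, their intersection numbers, the level of each point, and all the proximity relations agree. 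It thus remains to compare, on $V$, the fibers and the special sections of $\mathbb{F}_\delta$ and $\mathbb{F}_{\delta'}$. Using the chart $\iota_\delta: (x,y)\mapsto(1,x;1,y)$ one checks that the fiber of $\sigma_\delta$ over $(1:c)$ meets $V$ in the line $\{x=c\}$ and that $M_0^\delta=V(Y_1)$ meets $V$ in the line $\{y=0\}$, both expressions being independent of $\delta$. In particular the level-$0$ points of $\mathcal{C}_\delta$ and of $\mathcal{C}_{\delta'}$ sit on matching fibers, so the fibers relevant for the graph coincide in number and trace, and likewise for the special section.

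Finally, because $\mathcal{C}_\delta$ is affine every centre of $\pi_{\mathcal{C}_\delta}$ lies over $V$, so each intersection recorded in the graph takes place over $V$; combining this with the coincidence of the traces on $V$ and of the blowups over $V$, the relevant strict transforms on $Z_{\mathcal{C}_\delta}$ and on $Z_{\mathcal{C}_{\delta'}}$ have the same intersection pattern, and the two arrowed proximity graphs are equal. The one step that is not pure bookkeeping is the coordinate check, via $\iota_\delta$, that the fibers and the special section of $\mathbb{F}_\delta$ have $\delta$-independent traces on $V$; once that is in place, affineness and the local nature of blowups take care of the rest.
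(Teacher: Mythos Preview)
Your argument is correct and follows the same route as the paper: the paper does not give a separate proof of this lemma but declares it a consequence of the paragraph immediately preceding it, which is exactly the ideal-sheaf transport through $V$ that you spell out in your first paragraph. Your second and third paragraphs supply the justification for the ``in particular'' clause that the paper leaves implicit; the coordinate check via $\iota_\delta$ showing that the fibers and the special section have $\delta$-independent traces on $V$ is precisely the missing ingredient, and your observation that all intersections recorded by the graph occur over $V$ (because the configuration is affine) is the right way to conclude.
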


Next, we continue with two technical lemmas which will be used to prove our first main result. Let $\mathcal{C}_\delta=\{p_i\}_{i=1}^N$ be a configuration over $\mathbb{F}_\delta$ and consider a sequence of $N+2$ non-negative integers $a,b, m_1,\ldots,m_N$.  Then,  $W(\mathcal{C}_\delta;a,b; m_1,\ldots,m_N)$ represents the set of the effective divisors $C$ on $Z_{\mathcal{C}_\delta}$ in the complete linear system
$$|a (F^\delta)^*+b (M^\delta)^*-\sum_{i=1}^N m_i (E^\delta_{p_i})^*|$$ such that they do not contain $\tilde{M}_0^\delta$ as an irreducible component.

\begin{lemma}\label{lema1}
Let $\mathcal{C}_{\delta}=\{p_i\}_{i=1}^N$ be an affine configuration over a Hirzebruch surface $\mathbb{F}_\delta$. For simplicity, denote by $\mathbf{a}$ the value $\mathbf{a}(\mathrm{APG}(Z_{\mathcal{C}_\delta},\pi_{\mathcal{C}_\delta}))$ introduced in Theorem \ref{fg}. Assume that $\delta<\mathbf{a}$ and let $\mathcal{C}_{\mathbf{a}}$ be the configuration over $\mathbb{F}_{\mathbf{a}}$ provided by Lemma \ref{aa}. Consider a sequence of $N+2$ non-negative integers $a,b, m_1,\ldots,m_N$ such that $W(\mathcal{C}_\delta;a,b; m_1,\ldots,m_N)$ is not empty. Then,  there exists a map
$$\psi: W(\mathcal{C}_\delta;a,b; m_1,\ldots,m_N) \longrightarrow W(\mathcal{C}_{\mathbf{a}};a,b; m_1,\ldots,m_N)$$
such that, for any curve $C$ in $W(\mathcal{C}_\delta;a,b; m_1,\ldots,m_N)$, the following properties are satisfied:
\begin{itemize}

\item[(a)] $C^2=\psi(C)^2-b^2(\mathbf{a}-\delta)$.

\item[(b)] If $C$ is an integral curve that is not the strict transform of a fiber of $\mathbb{F}_{\delta}$, then $$\psi(C)^2\geq 0.$$

\end{itemize}

 \end{lemma}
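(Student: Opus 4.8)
The plan is to build $\psi$ by exploiting the identification of the affine parts. Since $\mathcal{C}_\delta$ is affine, its base points all lie in $U_{00}^\delta = V$, and by Lemma \ref{aa} the configuration $\mathcal{C}_{\mathbf{a}}$ over $\mathbb{F}_{\mathbf{a}}$ has the same restriction to $V$. So a curve $C \in W(\mathcal{C}_\delta; a,b; m_1,\ldots,m_N)$, which does not contain $\tilde M_0^\delta$, restricts to an affine curve $C\mid_V$ on $V$; I would then take its closure in $\mathbb{F}_{\mathbf{a}}$, pass to the strict transform under $\pi_{\mathcal{C}_{\mathbf{a}}}$, and add an appropriate multiple of $\tilde M_0^{\mathbf{a}}$ so that the resulting divisor lands in the prescribed linear system $|a(F^{\mathbf{a}})^* + b(M^{\mathbf{a}})^* - \sum m_i (E_{p_i}^{\mathbf{a}})^*|$. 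Concretely, a $\delta$-bihomogeneous polynomial of $\delta$-bidegree $(a,b)$ in $S_\delta$ whose affine equation is $f(x,y)$ has $\deg_y f \le b$ and the $x$-degree bounded suitably; reinterpreting the same $f(x,y)$ as an affine equation on $\mathbb{F}_{\mathbf{a}}$ gives a curve whose closure is cut out by an $\mathbf{a}$-bihomogeneous polynomial, and the difference between the two bidegrees is absorbed by $\tilde M_0$ (whose $\delta$-bidegree is $(-\delta,1)$ essentially — this is where the integer $\mathbf{a}-\delta$ enters). The key point is that the linear systems $W(\mathcal{C}_\delta;\cdots)$ and $W(\mathcal{C}_{\mathbf{a}};\cdots)$ are, via restriction to $V$, in natural bijection (not just a map), and this bijection is $\psi$; I would likely state it as a bijection even though only the map is needed.

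\medskip

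\textbf{Proof of (a): the self-intersection formula.} Once $\psi$ is defined, part (a) is a direct intersection-theoretic computation. Write $C$ in $\NS(Z_{\mathcal{C}_\delta})$ as $a(F^\delta)^* + b(M^\delta)^* - \sum m_i (E_{p_i}^\delta)^*$ and $\psi(C)$ analogously on $Z_{\mathcal{C}_{\mathbf{a}}}$. Because the base points and their proximity relations agree (Lemma \ref{aa}), the exceptional parts contribute identically; the only difference is $(M^\delta)^2 = \delta$ versus $(M^{\mathbf{a}})^2 = \mathbf{a}$, so $\bigl((M^{\mathbf{a}})^*\bigr)^2 - \bigl((M^\delta)^*\bigr)^2 = \mathbf{a} - \delta$ and the cross terms $(F^\delta)^*\cdot(M^\delta)^* = 1 = (F^{\mathbf{a}})^*\cdot(M^{\mathbf{a}})^*$ and $(F^\delta)^*{}^2 = 0 = (F^{\mathbf{a}})^*{}^2$ are unchanged. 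Hence $\psi(C)^2 - C^2 = b^2(\mathbf{a}-\delta)$, which is exactly (a). This is essentially bookkeeping with the standard intersection table of a Hirzebruch surface together with $(E_{p_i}^\delta)^*\cdot(E_{p_j}^\delta)^* = -\delta_{ij}$ and orthogonality of the $(E^\delta)^*$ to the pullbacks from the base.

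\medskip

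\textbf{Proof of (b): nonnegativity of $\psi(C)^2$.} This is where the hypothesis $\delta < \mathbf{a}$ does its work via Theorem \ref{fg}: since the arrowed proximity graphs coincide, $\mathbf{a}(\mathrm{APG}(Z_{\mathcal{C}_{\mathbf{a}}},\pi_{\mathcal{C}_{\mathbf{a}}})) = \mathbf{a}$, and $\mathbf{a} \ge \mathbf{a}$ trivially, so the effective cone $\Eff(Z_{\mathcal{C}_{\mathbf{a}}})$ is spanned by $S = \{[\tilde E_p^{\mathbf{a}}]\} \cup \{[\tilde F_1^{\mathbf{a}}],\ldots,[\tilde F_r^{\mathbf{a}}],[\tilde M_0^{\mathbf{a}}]\}$. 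Now $\psi(C)$ is, by construction, an effective curve on $Z_{\mathcal{C}_{\mathbf{a}}}$ that contains neither $\tilde M_0^{\mathbf{a}}$ nor (if $C$ is integral and not the strict transform of a fiber) any $\tilde F_j^{\mathbf{a}}$ nor any $\tilde E_p^{\mathbf{a}}$ as a component. So I would argue that $\psi(C)$ meets every generator of $S$ nonnegatively — for the generators it does not contain this is automatic, and one must check $\psi(C)\cdot\tilde M_0^{\mathbf{a}} \ge 0$, $\psi(C)\cdot\tilde F_j^{\mathbf{a}} \ge 0$, $\psi(C)\cdot\tilde E_p^{\mathbf{a}} \ge 0$ (for $\tilde M_0^{\mathbf{a}}$ this requires the curve not to contain it, which is built into $W$; for the others it uses integrality). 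Since the classes of effective curves whose support meets every extremal ray nonnegatively lie in the nef-like region, and more directly since $[\psi(C)]$ is a nonnegative combination of the $S$-generators with $\psi(C)$ itself being one "new" effective class, one gets $\psi(C)^2 = \psi(C)\cdot[\psi(C)] = \sum (\text{nonneg. coeff.})\,\psi(C)\cdot(\text{generator}) \ge 0$.

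\medskip

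\textbf{Anticipated main obstacle.} The routine parts are (a) and the intersection-table computation. The delicate point is making the construction of $\psi$ genuinely well-defined and verifying that $\psi(C)$ is \emph{effective} of the right class — in particular pinning down the exact multiple of $\tilde M_0^{\mathbf{a}}$ to add and checking it is $\ge 0$ (this is precisely why $\delta < \mathbf{a}$, i.e. $b(\mathbf{a}-\delta) \ge 0$ with the right sign, is needed) — and then justifying that when $C$ is integral and not a strict transform of a fiber, $\psi(C)$ shares no component with the "bad" generators $\tilde M_0^{\mathbf{a}}$, $\tilde F_j^{\mathbf{a}}$, $\tilde E_p^{\mathbf{a}}$, so that all the relevant intersection numbers with $S$-generators are nonnegative and the effective-cone decomposition forces $\psi(C)^2 \ge 0$. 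I would spend most of the write-up there, handling the fiber exception carefully since the strict transform of a fiber can indeed have negative self-intersection.
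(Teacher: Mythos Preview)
Your overall strategy---transport the curve to $\mathbb{F}_{\mathbf{a}}$ via the common affine chart $V$ and then invoke Theorem~\ref{fg}---is exactly what the paper does, and your computation for part~(a) is correct. The genuine problem is in your construction of $\psi$.

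You propose to take the closure of $C\mid_V$ in $\mathbb{F}_{\mathbf{a}}$ and then \emph{add a multiple of $\tilde{M}_0^{\mathbf{a}}$} to force the class into $|a(F^{\mathbf{a}})^*+b(M^{\mathbf{a}})^*-\sum m_i(E_{p_i}^{\mathbf{a}})^*|$. This cannot work: by definition, $W(\mathcal{C}_{\mathbf{a}};a,b;m_1,\dots,m_N)$ consists of divisors \emph{not} containing $\tilde{M}_0^{\mathbf{a}}$ as a component, so any nonzero such correction immediately throws you out of the target set. You even notice this implicitly, since two paragraphs later you assert that $\psi(C)$ ``contains neither $\tilde{M}_0^{\mathbf{a}}$ \dots''---which contradicts your own construction. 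Numerically the correction by $M_0^{\mathbf{a}}$ does not do what you want either: adding $k$ copies of $M_0^{\mathbf{a}}$ shifts the bidegree by $(-k\mathbf{a},k)$, whereas the closure of $C\mid_V$ already has second bidegree $b$ (since $Y_0\nmid G$ when $C$ avoids $\tilde{C}_{Y_0}$), so only the first component needs adjusting.

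The correct fix is to pad with copies of the \emph{fiber at infinity} $C_{X_0}$, which has bidegree $(1,0)$ and---crucially, because $\mathcal{C}_{\mathbf{a}}$ is affine---passes through no configuration point, so that $\tilde{C}_{X_0}\sim(F^{\mathbf{a}})^*$ and is not one of the negative generators in $S$. The paper implements this in one stroke by the explicit polynomial map
\[
G=\sum c_j X_0^{\alpha_j}X_1^{\beta_j}Y_0^{\gamma_j}Y_1^{\eta_j}\ \longmapsto\ G'=\sum c_j X_0^{\alpha_j+\eta_j(\mathbf{a}-\delta)}X_1^{\beta_j}Y_0^{\gamma_j}Y_1^{\eta_j},
\]
which is $\mathbf{a}$-bihomogeneous of bidegree $(a,b)$, satisfies $G'(1,x,1,y)=G(1,x,1,y)$, and is never divisible by $Y_1$; one then sets $\psi(C):=\tilde{C}_{G'}$. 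Factoring $G'=X_0^{r}H$ with $X_0\nmid H$ gives $\psi(C)=r\,\tilde{C}_{X_0}+\tilde{C}_H$, where $\tilde{C}_H$ is the strict transform of an integral curve on $\mathbb{F}_{\mathbf{a}}$ that is neither a fiber nor the special section. Once you have this decomposition, your argument for (b) (pairing $[\psi(C)]$, written in the cone spanned by $S$, against $\psi(C)$) goes through, and is essentially equivalent to the paper's direct computation $\psi(C)^2=\tilde{C}_H^2+2r\,(F^{\mathbf{a}})^*\cdot\tilde{C}_H\ge 0$.
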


\begin{proof}

Let $C$ be a curve in	$W(\mathcal{C}_\delta;a,b; m_1,\ldots,m_N)$ and let
 $$G(X_0,X_1,Y_0,Y_1):=\sum_{j=1}^s c_j X_0^{\alpha_j}X_1^{\beta_j}Y_0^{\gamma_j}Y_1^{\eta_j}=0$$
be an equation of $(\pi_{\mathcal C_\delta})_{*}C$ (hence $G$ is a $\delta$-bihomogeneous polynomial of $\delta$-bidegree $(a,b)$ and, therefore, $\alpha_j+\beta_j-\delta \eta_j=a$ and $\gamma_j+\eta_j=b$, $1\leq j\leq s$).

We define $\Psi(C)$ to be the strict transform on $Z_{\mathcal{C}_{\mathbf{}a}}$ of the curve on $\mathbb{F}_{\mathbf{a}}$ given by the equation
$$G'(X_0,X_1,Y_0,Y_1):=\sum_{j=1}^s c_j X_0^{\alpha_j+\eta_j(\mathbf{a}-\delta)}X_1^{\beta_j}Y_0^{\gamma_j}Y_1^{\eta_j}=0.$$
Notice that $G'$ is $\mathbf{a}$-bihomogeneous with $\mathbf{a}$-bidegree $(a,b)$, it does not have $Y_1$ as a component and $G'(1,x,1,y)=G(1,x,1,y)$ (hence $(\pi_{\mathcal C_\mathbf{a}})_{*}(\Psi(C))\mid_V=(\pi_{\mathcal C_\delta})_{*} C\mid_V$). Since $\mathcal{C}_\delta\mid_V=\mathcal{C}_{\mathbf{a}}\mid_V$, these last facts guarantee that $\Psi(C)\in W(\mathcal{C}_{\mathbf{a}};a,b; m_1,\ldots,m_N)$ and the map $\Psi$ is well-defined. Then, $$C^2=2ab+b^2\delta-\sum_{i=1}^N m_i^2 = 2ab+b^2\mathbf{a}-b^2(\mathbf{a}-\delta)-\sum_{i=1}^N m_i^2=\Psi(C)^2-b^2(\mathbf{a}-\delta),$$
what proves Item (a).

Now we are going to prove Item (b). Let $C$ be as in the statement. We can also assume that $(\pi_{\mathcal{C}_{\delta}})_*C\neq C_{Y_0}$ (because, otherwise, the statement is true). Let $r$ be the non-negative integer such that $G'(X_0,X_1,Y_0,Y_1)=X_0^r H(X_0,X_1,Y_0,Y_1)$, where $H$ is an $\mathbf{a}$-bihomogeneous polynomial of $\mathbf{a}$-bidegree $(a-r,b)$ such that $X_0$ does not divide $H$. The fact that ${C}_H\mid_{V}=(\pi_{\mathcal{C}})_*C\mid_{V}$, the definition of $G'$ and the assumption on $(\pi_{\mathcal{C}_\delta})_*C$ allow us to deduce that ${C}_H$ is an integral curve on $\mathbb{F}_{\mathbf{a}}$ that is neither a fiber nor the special section. Moreover
$$\Psi(C)=\tilde{C}_{G'}=r\;\tilde{{C}}_{X_0}+\tilde{{C}}_H\sim r\;(F^{\mathbf{a}})^*+ \tilde{C}_H,$$
where $\tilde{\cdot}$ (respectively, $\sim$) denotes strict transform (respectively, linear equivalence) on $Z_{\mathcal{C}_{\mathbf{a}}}$. Then $$\Psi(C)^2 = \tilde{C}_H^2+2r\;(F^{\mathbf{a}})^*\cdot \tilde{C}_H$$ and, therefore,  $\Psi(C)^2\geq 0$ because $F^{\mathbf{a}}$ is nef and $\tilde{C}_H^2\geq 0$. This last inequality follows from Theorem \ref{fg} because the non-exceptional integral curves on $Z_{\mathcal{C}_{\mathbf{a}}}$ with negative self-intersection are strict transforms either of a fiber or the special section.

\end{proof}

\begin{lemma}\label{a1}
Let $\mathcal{C}_{\delta}=\{p_i\}_{i=1}^N$ be a configuration over a Hirzebruch surface $\mathbb{F}_\delta$. Set $\mathbf{a} := \mathbf{a}(\mathrm{APG}(Z_{\mathcal{C}_\delta},\pi_{\mathcal{C}_\delta}))$. Assume that $0<\delta<\mathbf{a}$ and {\rm define} $\omega(\mathcal{C}_\delta):=\mathbf{a}-\delta$. Then, the inequality
\begin{equation}\label{eee1-c}
\frac{C^2}{(H^*\cdot C)^2}\geq -\omega(\mathcal{C}_\delta)
\end{equation}
holds for any nef divisor $H$ on $\mathbb{F}_\delta$ and for any integral curve $C$ on $Z_{\mathcal{C}_\delta}$ such that $H^*\cdot C>0$ and $(\pi_{\mathcal{C}_\delta})_*C$ is neither $M_0^\delta$ nor a fiber on $\mathbb{F}_\delta$.

\end{lemma}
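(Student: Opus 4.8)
The plan is to reduce the statement to Lemma \ref{lema1} together with the explicit description of the nef cone of $\mathbb{F}_\delta$. First I would observe that all the quantities in \eqref{eee1-c}, as well as the integer $\mathbf{a}(\mathrm{APG}(Z_{\mathcal{C}_\delta},\pi_{\mathcal{C}_\delta}))$, are invariant under the natural action of $\Aut(\mathbb{F}_\delta)$ on configurations: for $\delta>0$ an automorphism of $\mathbb{F}_\delta$ permutes the fibers and fixes the special section $M_0^\delta$ (the unique integral curve of negative self-intersection), hence carries the arrowed proximity graph of $\mathcal{C}_\delta$ to that of its image isomorphically, and it acts on $\mathrm{NS}(\mathbb{F}_\delta)$ preserving the intersection form and the nef cone. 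Since any configuration over $\mathbb{F}_\delta$ can be moved by such an automorphism so that all its level-$0$ points lie in $U_{00}^\delta$ (push the points off a fiber using the $\mathrm{PGL}_2$ acting on the base, then off $M^\delta$ by a translation of the fibers followed by a fiberwise scaling), we may assume $\mathcal{C}_\delta$ is affine; then Lemma \ref{aa} provides the configuration $\mathcal{C}_{\mathbf{a}}$ over $\mathbb{F}_{\mathbf{a}}$ with the same arrowed proximity graph.

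Next I would fix $C$ and $H$ as in the statement and analyse the class of $C$. As $H^*\cdot C=H\cdot(\pi_{\mathcal{C}_\delta})_*C>0$, the curve $C$ is not contracted by $\pi_{\mathcal{C}_\delta}$, so $D:=(\pi_{\mathcal{C}_\delta})_*C$ is an integral curve on $\mathbb{F}_\delta$ and $C=\tilde D$. Writing $[C]=a(F^\delta)^*+b(M^\delta)^*-\sum_{i=1}^N m_i(E_{p_i}^\delta)^*$, the integers $m_i$ are the multiplicities of the successive strict transforms of $D$, so $m_i\geq0$; and since $D$ is integral and, by hypothesis, distinct from $M_0^\delta$ and from every fiber, one gets $a=D\cdot M_0^\delta\geq0$ and $b=D\cdot F^\delta\geq0$. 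Moreover $b\geq1$, since $b=0$ would put $D$ in $|aF^\delta|$, forcing $a=1$ and $D$ to be a fiber. Hence $C$ lies in $W(\mathcal{C}_\delta;a,b;m_1,\dots,m_N)$, which is therefore nonempty.

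Since $\delta<\mathbf{a}$, I would then invoke Lemma \ref{lema1}: it yields $\psi(C)\in W(\mathcal{C}_{\mathbf{a}};a,b;m_1,\dots,m_N)$ with $C^2=\psi(C)^2-b^2(\mathbf{a}-\delta)$, and, as $C$ is integral and (since $D$ is not a fiber) not the strict transform of a fiber, part (b) gives $\psi(C)^2\geq0$; thus $C^2\geq-b^2\,\omega(\mathcal{C}_\delta)$. It then remains to bound $H^*\cdot C$ below by $b$. Writing $[H]=\alpha[F^\delta]+\beta[M^\delta]$, nefness of $H$ amounts to $\alpha,\beta\geq0$ (the nef cone of $\mathbb{F}_\delta$ is generated by $[F^\delta]$ and $[M^\delta]$), so $H^*\cdot C=H\cdot D=\alpha b+\beta a+\beta b\delta$. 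If $\beta\geq1$ this is at least $\beta b\delta\geq b$ because $\delta\geq1$; if $\beta=0$ then $\alpha b=H^*\cdot C>0$ together with $b\geq1$ forces $\alpha\geq1$, so again $\alpha b\geq b$. Hence $(H^*\cdot C)^2\geq b^2$, and combining this with $C^2\geq-b^2\,\omega(\mathcal{C}_\delta)$ and $\omega(\mathcal{C}_\delta)=\mathbf{a}-\delta>0$ gives \eqref{eee1-c}.

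The bookkeeping around Lemma \ref{lema1} is routine; the two points that call for care are the reduction to an affine configuration (one must check that automorphisms of $\mathbb{F}_\delta$ preserve the arrowed proximity graph and are plentiful enough to make any configuration affine) and the inequality $H^*\cdot C\geq b$, where the hypothesis $\delta>0$ is used essentially: for $\delta=0$ the term $\beta b\delta$ disappears and the case $\beta\geq1$ of that estimate fails.
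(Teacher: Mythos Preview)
Your proof is correct and follows essentially the same route as the paper's: reduce to an affine configuration via an automorphism of $\mathbb{F}_\delta$, apply Lemma~\ref{lema1} to obtain $C^2\geq -b^2(\mathbf{a}-\delta)$, and then bound the denominator using the explicit nef cone of $\mathbb{F}_\delta$. The only cosmetic difference is that the paper treats $H=F^\delta$ and $H=M^\delta$ separately (invoking that the nef monoid is generated by these two classes), whereas you directly establish $H^*\cdot C\geq b$ for every nonzero nef $H$; your observation that this last step genuinely uses $\delta\geq1$ is exactly the reason the case $\delta=0$ is handled separately in Lemma~\ref{a2}.
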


\begin{proof}

Let us show that it suffices to prove the statement when $\mathcal{C}_\delta$ is an affine configuration.
Let $X_0+\mathfrak{a} X_1=0$, with $ \mathfrak{a} \in k$, the equation of a fiber on $\mathbb{F}_{\delta}$ containing no point in  ${\mathcal C}_{\delta}$. Pick a section on $\mathbb{F}_{\delta}$ with equation $Y_0+\mathfrak{b}(X_0+\mathfrak{a} X_1)^\delta Y_1=0$,
$\mathfrak{b} \in k$, having no point in ${\mathcal C}_{\delta}$. The map $T:\mathbb{F}_\delta \rightarrow \mathbb{F}_\delta$, $T(X_0,X_1; Y_0,Y_1)=(X_0',X_1'; Y_0', Y_1')$, where
	$$\begin{pmatrix}
		X_0'\\ X_1'\\ Y_0'\\Y_1'
	\end{pmatrix}:=\begin{pmatrix}
		1 & \mathfrak{a} & 0 & 0\\
		0 & 1 & 0 & 0\\
		0 & 0 & 1 & \mathfrak{b}(X_0+ \mathfrak{a} X_1)^\delta\\
		0 & 0 & 0 & 1
	\end{pmatrix} \begin{pmatrix}
		X_0\\ X_1\\ Y_0\\Y_1
	\end{pmatrix},$$
	defines an automorphism of $\mathbb{F}_{\delta}$. As a consequence of the universal property of the blowup \cite[Chapter II, Proposition 7.14]{MR463157}, there exists an affine configuration $\mathcal{C}'_\delta$ over $\mathbb{F}_\delta$ such that  ${\rm APG}(Z_{\mathcal{C}_\delta}, \pi_{\mathcal{C}_\delta})= {\rm APG}(Z_{\mathcal{C}'_\delta}, \pi_{\mathcal{C}'_\delta})$ and $T$ lifts to an isomorphism $T':Z_{\mathcal{C}_\delta}\rightarrow Z_{\mathcal{C}'_\delta}$.
Therefore, as before mentioned, \emph{we can assume, without loss of generality, that the configuration $\mathcal{C}_\delta$ is affine}.\medskip

Since the monoid of nef divisors on $\mathbb{F}_\delta$ is spanned by the classes of a general fiber $F^\delta$ and the section $M^\delta$, it suffices to prove that the statement is true for $H=F^\delta$ and $H=M^\delta$.\medskip

Suppose that $C$ is an integral curve on $Z_{\mathcal{C}_\delta}$ such that $(\pi_{\mathcal{C}_\delta})_*C$ is neither $M_0^\delta$ nor a fiber of $\mathbb{F}_\delta$.\medskip

Firstly, let us prove that Inequality (\ref{eee1-c}) holds for $H=F^\delta$. Let $a,b \in \mathbb{Z}$ such that $C$ is linearly equivalent to $a(F^{\delta})^*+b(M^\delta)^*-\sum_{i=1}^N m_i (E^{\delta}_{p_i})^*$ where, for all $i=1,\ldots,N$, $m_i$ is the multiplicity at $p_i$ of the strict transform of $(\pi_{\mathcal{C}})_*C$ on the surface containing $p_i$. Notice that $b>0$ (since $(\pi_{\mathcal{C}_\delta})_*C$ is not a fiber) and that $C\in W(\mathcal{C}_\delta; a,b; m_1,\ldots,m_N)$.

Consider the map $\psi: W(\mathcal{C}_\delta;a,b; m_1,\ldots,m_N) \longrightarrow W(\mathcal{C}_{\mathbf{a}};a,b; m_1,\ldots,m_N)$ defined in Lemma \ref{lema1}. This result shows that $\psi(C)$ is a curve on $Z_{\mathcal{C}_{\mathbf{a}}}$ such that $\psi(C)^2\geq 0$ and
\begin{equation}\label{e}
C^2=\psi(C)^2-b^2(\mathbf{a}-\delta).
\end{equation}
Therefore,
$$\frac{C^2}{((F^\delta)^*\cdot  C)^2}\geq -\frac{b^2}{((F^\delta)^*\cdot C)^2}(\mathbf{a}-\delta)= -(\mathbf{a}-\delta) = -\omega(\mathcal{C}_\delta).$$\medskip

We finish this proof by showing that Inequality (\ref{eee1-c}) also holds for $H=M^\delta$. Indeed, using again the map $\psi$ we get
$$\frac{C^2}{((M^\delta)^*\cdot C)^2}=\frac{C^2}{(a+\delta b)^2}=\frac{\psi(C)^2-b^2(\mathbf{a}-\delta)}{(a+\delta b)^2}\geq \frac{-b^2(\mathbf{a}-\delta)}{(a+\delta b)^2}\geq -(\mathbf{a}-\delta) = -\omega(\mathcal{C}_\delta).$$
The second equality and the first inequality follow by Lemma \ref{lema1}, and the second inequality holds because $a\geq 0$, which is true since $(\pi_{\mathcal{C}_\delta})_*C$ is integral and it is not the special section.

\end{proof}

Lemma \ref{a1} assumes $\delta >0$. The forthcoming Lemma \ref{a2} is  a close result that considers a configuration $\mathcal{C}_0=\{p_i\}_{i=1}^N$ over $\mathbb{F}_0$. To state it, we require some notation.

$\mathbb{F}_0$ is isomorphic to $\mathbb{P}^1\times \mathbb{P}^1$ and, therefore, the special section $M_0^0$ is linearly equivalent to $M^0$. As a consequence, by considering an appropriate automorphism (like that showed at the beginning of the proof of Lemma \ref{a1}), it can be deduced that any section in $|M^0|$ can be mapped to the special section $M_0^0$ and, therefore, plays the same role of the special section. This fact implies that the pair $(Z_{\mathcal{C}_0},\pi_{\mathcal{C}_0})$ can be attached to finitely many
arrowed proximity graphs (depending on the section in $|M^0|$ that we consider as special section). Consequently, there are finitely many possible values for $\mathbf{a}(\mathrm{APG}(Z_{\mathcal{C}_0},\pi_{\mathcal{C}_0}))$.
We denote by $\mathbf{a}_1(\mathcal{C}_0)$ the minimum of these values.

In addition, by considering the projection $\sigma_0: \mathbb{F}_0=\mathbb{P}^1\times \mathbb{P}^1\rightarrow \mathbb{P}^1$ onto the second component (instead of onto the first one), $\mathbb{F}_0$ is regarded as a ruled surface over $\mathbb{P}^1$ where the roles of fibers in $|F^0|$ and sections in $|M^0|$ are exchanged. Moreover, as above, we have again different possibilities to choose the special section among those in $|F^0|$ (using suitable automorphisms); therefore, we can generate finitely many different arrowed proximity graphs and, hence, several possible values for $\mathbf{a}(\mathrm{APG}(Z_{\mathcal{C}_0},\pi_{\mathcal{C}_0}))$. The minimum of these values is denoted by $\mathbf{a}_2(\mathcal{C}_0)$.

 Finally, we define $\omega(\mathcal{C}_{0}) : = \max \{ \mathbf{a}_1(\mathcal{C}_0), \mathbf{a}_2(\mathcal{C}_0)\}$. Notice that the above described choices for $\mathbf{a}_1(\mathcal{C}_0)$ and $\mathbf{a}_2(\mathcal{C}_0)$ give the best possible bound $-\omega(\mathcal{C}_0)$ (using our arguments) in the next lemma.

\begin{lemma}\label{a2}

Let $\mathcal{C}_0=\{p_i\}_{i=1}^N$ be a configuration over $\mathbb{F}_0$ and let $\omega(\mathcal{C}_{0})$ be the value defined above.
Then
\begin{equation}
\frac{C^2}{(H^*\cdot C)^2}\geq -\omega(\mathcal{C}_0)
\end{equation}
for any nef divisor $H$ on $\mathbb{F}_0$ and for any integral curve $C$ on $Z_{\mathcal{C}_0}$ such that $H^*\cdot C>0$ and $(\pi_{\mathcal{C}_0})_*C$ is neither linearly equivalent to $M^0$ nor  $F^0$.

\end{lemma}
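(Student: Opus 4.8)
The plan is to run the proof of Lemma~\ref{a1} with $\delta=0$, exploiting the extra symmetry of $\mathbb{F}_0\cong\mathbb{P}^1\times\mathbb{P}^1$: there both $F^0$ and $M^0$ are fiber classes (for the two projections onto $\mathbb{P}^1$), so the ``$H=M^\delta$'' half of the argument in Lemma~\ref{a1} --- the only place where $\delta>0$ was really used --- gets replaced by a second run of the ``$H=F^\delta$'' half after exchanging the two rulings. First I would note that $\operatorname{Nef}(\mathbb{F}_0)$ is generated by $[F^0]$ and $[M^0]$ and that $H\mapsto H^*\cdot C$ is additive, so it suffices to treat $H=F^0$ and $H=M^0$ separately.

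For $H=F^0$: regard $\mathbb{F}_0$ as the ruled surface given by the first projection, with general fiber in $|F^0|$, and use the freedom recalled just before the lemma to pick as special section a member of $|M^0|$ for which the arrowed proximity graph realises the value $\mathbf{a}_1(\mathcal{C}_0)$; after the automorphism step that opens the proof of Lemma~\ref{a1} we may assume $\mathcal{C}_0$ is affine without changing the graph, hence with $\mathbf{a}=\mathbf{a}_1(\mathcal{C}_0)$. Since $0<\mathbf{a}$, Lemma~\ref{lema1} applies. As $(\pi_{\mathcal{C}_0})_*C$ is integral and linearly equivalent to neither $F^0$ nor $M^0$, it is neither a fiber nor the special section, so $C\in W(\mathcal{C}_0;a,b;m_1,\dots,m_N)$ with $b=(F^0)^*\cdot C>0$ (and $a=(M^0)^*\cdot C\ge 0$ because $M^0$ is nef); moreover $C$ is not the strict transform of a fiber, so Lemma~\ref{lema1}(b) gives $\psi(C)^2\ge 0$ and then Lemma~\ref{lema1}(a) gives $C^2=\psi(C)^2-b^2\mathbf{a}\ge -b^2\mathbf{a}$, whence $C^2/((F^0)^*\cdot C)^2\ge -\mathbf{a}_1(\mathcal{C}_0)\ge -\omega(\mathcal{C}_0)$. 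For $H=M^0$ one repeats this verbatim after exchanging the two rulings of $\mathbb{F}_0$, so that $M^0$ becomes the general-fiber class and $F^0$ the section class, and choosing as special section a member of $|F^0|$ realising $\mathbf{a}_2(\mathcal{C}_0)$; the identical computation yields $C^2/((M^0)^*\cdot C)^2\ge -\mathbf{a}_2(\mathcal{C}_0)\ge -\omega(\mathcal{C}_0)$. The two bounds together give the statement.

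I do not expect a genuine obstacle: the analytic content is already in Lemmas~\ref{lema1} and~\ref{a1}, and what remains is bookkeeping. The points needing care are (i) that the choices of special section realising $\mathbf{a}_1(\mathcal{C}_0)$ and $\mathbf{a}_2(\mathcal{C}_0)$ are legitimate and survive the passage to an affine configuration --- which is exactly what the discussion before the lemma, together with the automorphism argument opening the proof of Lemma~\ref{a1}, provides (for $\delta=0$ that automorphism fixes $V(Y_1)$, so it preserves the chosen special section); (ii) that Lemma~\ref{lema1} really applies when $\delta=0$, which holds since its only standing hypotheses are affineness and $\delta<\mathbf{a}$, and $\mathbf{a}$ is a positive integer; and (iii) that $C$ must be excluded from both ``bad'' families at once --- a fiber or the special section of each of the two rulings --- which is why the hypothesis forbids $(\pi_{\mathcal{C}_0})_*C$ from being linearly equivalent to $F^0$ or to $M^0$, rather than merely from equalling a single special section as in Lemma~\ref{a1}.
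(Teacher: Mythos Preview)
Your proposal is correct and follows essentially the same approach as the paper: reduce to $H=F^0$ and $H=M^0$, run the $H=F^\delta$ half of the proof of Lemma~\ref{a1} for the first projection with a special section in $|M^0|$ realising $\mathbf{a}_1(\mathcal{C}_0)$, then swap the two rulings of $\mathbb{F}_0$ and repeat with a special section in $|F^0|$ realising $\mathbf{a}_2(\mathcal{C}_0)$. Your bookkeeping remarks (i)--(iii) are accurate and, if anything, make the exclusion hypotheses more explicit than the paper does.
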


\begin{proof}

Taking a suitable automorphism (as explained in the proof of Lemma \ref{a1}) we can assume that the arrowed proximity graph of $\mathcal{C}_0$ determines the value $\mathbf{a}_1(\mathcal{C}_0)$. Following the same argument of the proof of Lemma \ref{a1} it is deduced that
$$
\frac{C^2}{((F^0)^*\cdot C)^2}\geq -\mathbf{a}_1(\mathcal{C}_0)
$$
for any integral curve $C$ on $Z_{\mathcal{C}_0}$ such that $(\pi_{\mathcal{C}_0})_*C$ is not linearly equivalent to $F^0$.

Now considering the projection onto the second component $\sigma_0$ and after a suitable automorphism, we can suppose that the arrowed proximity graph of $\mathcal{C}_0$ gives the value $\mathbf{a}_2(\mathcal{C}_0)$. Since the curves of the linear systems $|F^0|$ and $|M^0|$ have exchanged the roles, reasoning again as in the proof of Lemma \ref{a1}, we get that
$$
\frac{C^2}{((M^0)^*\cdot C)^2}\geq -\mathbf{a}_2(\mathcal{C}_0)
$$
for any integral curve $C$ on $Z_{\mathcal{C}_0}$ such that $(\pi_{\mathcal{C}_0})_*C$ is not linearly equivalent to $M^0$. This concludes the proof.

\end{proof}

Theorem \ref{fg} and Lemmas \ref{a1} and \ref{a2} prove the following result.  It is our first main result, and the first part of Theorem \ref{elA} in the introduction is a less explicit version of 
it and the forthcoming Remark \ref{not}.

\begin{theorem}\label{t1}
Let $\mathcal{C}_\delta=\{p_i\}_{i=1}^N$ be a configuration over a Hirzebruch surface $\mathbb{F}_\delta$ giving rise to the  smooth rational  surface $Z_{\mathcal{C}_\delta}$. For $\delta>0$ (respectively, $\delta=0$) denote by $\mathcal{S}$ the set formed by $M_0^\delta$ and the curves in $|F^\delta|$ (respectively, curves in $|F^0|\cup |M^0|$) passing through some point in $\mathcal{C}_{\delta}$. Set $\alpha :=\min\{\tilde{C}^2\mid C\in \mathcal{S}\}$ and $\beta:=\min\{\tilde{E}_{p_i}^2\mid i=1,\ldots,N\}$. Let $\mathbf{a}$ be the value considered in Lemma \ref{a1} and $\omega(\mathcal{C}_\delta)$ that defined in the statements of Lemmas \ref{a1} and \ref{a2}. Then

\begin{itemize}

\item[(a)] If $\delta>0$ and $\delta\geq \mathbf{a}$, for any integral curve $C$ on $Z_{\mathcal{C}_\delta}$ it holds the following inequality:
$$C^2\geq \min\{\alpha,\beta\}.$$

\item[(b)] If $\delta=0$, or $\delta>0$ and $\delta< \mathbf{a}$, the inequality
$$
\frac{C^2}{(H^*\cdot C)^2}\geq \min\{\alpha,-\omega(\mathcal{C}_\delta)\}
$$
holds for any nef divisor $H$ on $\mathbb{F}_\delta$ and for any integral curve $C$ on $Z_{\mathcal{C}_\delta}$ such that $H^*\cdot C>0$.

\end{itemize}

\end{theorem}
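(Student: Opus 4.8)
The plan is to derive Theorem~\ref{t1} directly from the three results just proved: Theorem~\ref{fg} handles the polyhedral regime $\delta\ge\mathbf a$, while Lemmas~\ref{a1} and~\ref{a2} handle the remaining cases once one has separately disposed of the finite list of curves those lemmas explicitly leave out — the strict transform of the special section $M_0^\delta$, of the fibers of $\mathbb F_\delta$, and (when $\delta=0$) of the curves in $|F^0|\cup|M^0|$. Two elementary observations will be used repeatedly: distinct integral curves meet non-negatively, and for a negative integer $n$ and a positive integer $d$ one has $n/d^2\ge n$; the second is what upgrades a bare bounded-negativity inequality to the weighted form whenever $(H^*\cdot C)^2\ge1$.

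For part~(a) I would take an integral curve $C$ with $C^2<0$ (if $C^2\ge0$ the conclusion is automatic, since the strict transform of any exceptional divisor has self-intersection $\le-1$, so $\beta\le-1<0$). By Theorem~\ref{fg}, $\mathrm{Eff}(Z_{\mathcal C_\delta})$ is generated by $S=\{[\tilde E_p^\delta]\}_{p\in\mathcal C_\delta}\cup\{[\tilde F_1^\delta],\dots,[\tilde F_r^\delta],[\tilde M_0^\delta]\}$, so I can write $[C]=\sum_{D\in S}\lambda_D[D]$ with all $\lambda_D\ge0$. Intersecting with $C$ gives $C^2=\sum_{D\in S}\lambda_D(C\cdot D)$; since $C\cdot D\ge0$ for every integral curve $D\ne C$ whereas $C\cdot C<0$, the assumption $C^2<0$ is incompatible with $C\notin S$, so $C$ must be one of $\tilde M_0^\delta$, $\tilde F_j^\delta$, or $\tilde E_{p_i}^\delta$; the first two give $C^2\ge\alpha$ and the last $C^2\ge\beta$. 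Hence $C^2\ge\min\{\alpha,\beta\}$ in all cases.

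For part~(b) I would fix a nef divisor $H$ on $\mathbb F_\delta$ and an integral curve $C$ with $d:=H^*\cdot C>0$, noting that $d$ is a positive integer, so $d^2\ge1$. If $C^2\ge0$ the quotient is non-negative and we are done because $\omega(\mathcal C_\delta)>0$; so assume $C^2<0$ and put $C':=(\pi_{\mathcal C_\delta})_*C$, which is an integral curve on the base (it is not contracted, or else $H^*\cdot C=0$). If $C'$ is $M_0^\delta$, then $C'\in\mathcal S$ and $C^2=(\tilde M_0^\delta)^2\ge\alpha$; if $C'$ is a fiber of $\mathbb F_\delta$ (case $\delta>0$) or a curve in $|F^0|\cup|M^0|$ (case $\delta=0$), then either it passes through a point of $\mathcal C_\delta$, so $C'\in\mathcal S$ and $C^2\ge\alpha$ again, or it does not, so $\tilde{C'}$ is the total transform and $C^2=(C')^2=0$, contradicting $C^2<0$. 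In all these sub-cases $C^2/d^2\ge C^2\ge\alpha\ge\min\{\alpha,-\omega(\mathcal C_\delta)\}$. Otherwise $C'$ is none of these curves, so the hypothesis of Lemma~\ref{a1} (when $\delta>0$, using $\delta<\mathbf a$) or of Lemma~\ref{a2} (when $\delta=0$) is met, and it yields $C^2/(H^*\cdot C)^2\ge-\omega(\mathcal C_\delta)\ge\min\{\alpha,-\omega(\mathcal C_\delta)\}$. Assembling the cases proves~(b).

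The step I expect to be the main obstacle is the bookkeeping in part~(b): one has to be certain that the curves thrown out of the hypotheses of Lemmas~\ref{a1} and~\ref{a2} are exactly the ones whose strict transforms are controlled by $\alpha$ (together with harmless ones of self-intersection $0$), and in the $\delta=0$ case one should note that an integral curve whose pushforward is merely \emph{linearly equivalent} to $F^0$ or $M^0$ is itself a fiber of one of the two rulings — its bidegree being forced to be $(1,0)$ or $(0,1)$ by irreducibility — so it, too, is covered. Once this matching is pinned down, the remaining inequalities are all immediate.
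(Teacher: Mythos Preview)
Your proof is correct and follows exactly the approach the paper intends: the paper's own proof of Theorem~\ref{t1} is the single sentence ``Theorem~\ref{fg} and Lemmas~\ref{a1} and~\ref{a2} prove the following result,'' and you have accurately supplied the case analysis that this sentence leaves implicit. Your handling of the bookkeeping in part~(b)---in particular the observation that $H^*\cdot C>0$ forces $C$ to be non-exceptional, and that in the $\delta=0$ case an integral curve with pushforward linearly equivalent to $F^0$ or $M^0$ is a fiber of one of the two rulings---is precisely what is needed to match the excluded curves of Lemmas~\ref{a1} and~\ref{a2} with the set~$\mathcal S$.
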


To finish this subsection we are going to state, on the one hand, Theorem \ref{t1p}, which is an analogue to Theorem \ref{t1} for rational surfaces obtained from a configuration over $\mathbb{P}^2$. On the other hand, the forthcoming Theorems \ref{b1} and \ref{t2p} imply, roughly speaking, that, if $Z$ is a smooth projective rational surface, the quotients $C^2/(D\cdot C)^2$, $C$ being an integral curve and $D$ a nef divisor on $Z$ such that $D \cdot C > 0$,  can tend to $-\infty$ only if  the class $[D]$ approaches the boundary of the nef cone.

Let $G$ be a nef divisor on $Z$ and $\epsilon$ a positive real number. Consider the set introduced in (\ref{eldelta}):
\begin{multline}
\Delta_G (Z, \epsilon) := \left\{ D \mbox{ nef divisor on } Z   \mbox { such that } \right.\\
\left. (D - \epsilon G) \cdot C \geq 0 \mbox{ for all integral curve $C$ on $Z$ such that $D \cdot C > 0$} \right\}.
\end{multline}
Note that, in particular, $D\in \Delta_G(Z,\epsilon)$ when the $\mathbb{R}$-divisor $D-\epsilon G$ is nef.

\begin{theorem}\label{b1}
	Let $\mathcal{C}_\delta=\{p_i\}_{i=1}^N$ be a configuration over a Hirzebruch surface $\mathbb{F}_\delta$  and consider $\alpha$, $\beta$, $\mathbf{a}$ and $\omega(\mathcal{C}_{\delta})$ as in Theorem \ref{t1}. Assume that either $\delta=0$ or $\delta>0$ and $\delta<\mathbf{a}$, and let $\epsilon$ be any positive real number.
Let $D$ be a divisor on $Z_{\mathcal{C}_\delta}$ such that $D\in \Delta_{H^*}(Z_{\mathcal{C}_\delta},\epsilon)$ for some non-zero nef divisor $H$ on $\mathbb{F}_\delta$. Then, the inequality
\begin{equation}\label{vvv}
\frac{C^2}{(D\cdot C)^2}\geq \min\{\alpha,\beta,-\omega(\mathcal{C}_\delta)/\epsilon^2\}
\end{equation}
holds for any integral curve $C$ on $Z_{\mathcal{C}_\delta}$ such that $D\cdot C>0$.
\end{theorem}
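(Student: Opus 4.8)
The plan is to reduce Theorem \ref{b1} to Theorem \ref{t1} by exploiting the defining inequality of the set $\Delta_{H^*}(Z_{\mathcal{C}_\delta},\epsilon)$. Fix an integral curve $C$ on $Z_{\mathcal{C}_\delta}$ with $D\cdot C>0$ and a non-zero nef divisor $H$ on $\mathbb{F}_\delta$ with $D\in\Delta_{H^*}(Z_{\mathcal{C}_\delta},\epsilon)$. First I would dispose of the degenerate cases: if $C$ is one of the finitely many curves in the set $\mathcal{S}$ of Theorem \ref{t1} (the special section or a fiber/section through a point of $\mathcal{C}_\delta$), or an exceptional curve $\tilde{E}_{p_i}$, then $C^2\geq\min\{\alpha,\beta\}$ directly and, since $D\cdot C\geq 1$ (intersection numbers are integers and $D\cdot C>0$), we get $C^2/(D\cdot C)^2\geq C^2\geq\min\{\alpha,\beta\}$ when $C^2<0$, and the inequality is trivial when $C^2\geq 0$. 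So I may assume $(\pi_{\mathcal{C}_\delta})_*C$ is neither $M_0^\delta$ nor (linearly equivalent to) a fiber, and in the $\delta=0$ case not linearly equivalent to $M^0$ either; this is exactly the hypothesis under which Lemmas \ref{a1} and \ref{a2} apply, hence the conclusion of Theorem \ref{t1}(b) gives $C^2/(H^*\cdot C)^2\geq -\omega(\mathcal{C}_\delta)$.

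The key observation is that the membership $D\in\Delta_{H^*}(Z_{\mathcal{C}_\delta},\epsilon)$ gives $(D-\epsilon H^*)\cdot C\geq 0$, i.e. $D\cdot C\geq\epsilon\,(H^*\cdot C)$. Since $D\cdot C>0$ and $\epsilon>0$, this forces $H^*\cdot C>0$ as well (note $H^*\cdot C\geq 0$ automatically, as $H$ is nef on $\mathbb{F}_\delta$ so $H^*$ is nef on $Z_{\mathcal{C}_\delta}$, and if it were zero then $D\cdot C\geq\epsilon\cdot 0=0$ would not contradict positivity — so I must argue slightly more carefully here). Actually the cleanest route: we only need the chain of inequalities
\[
\frac{C^2}{(D\cdot C)^2}\;\geq\;\frac{C^2}{\epsilon^2\,(H^*\cdot C)^2}
\]
when $C^2<0$, which holds precisely because $(D\cdot C)^2\geq\epsilon^2(H^*\cdot C)^2\geq 0$ and dividing a negative number by a larger non-negative denominator increases it. If instead $C^2\geq 0$ the desired bound is trivial since the right-hand side of \eqref{vvv} is non-positive. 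In the case $C^2<0$ we also get, from $(D-\epsilon H^*)\cdot C\geq 0$ and $D\cdot C>0$, that $H^*\cdot C>0$ (if $H^*\cdot C=0$ then $C$ would be contracted by a morphism and its class would be proportional to exceptional classes, a case already handled), so Theorem \ref{t1}(b) applies to $C$ and $H$, yielding $C^2/(H^*\cdot C)^2\geq-\omega(\mathcal{C}_\delta)$. Combining,
\[
\frac{C^2}{(D\cdot C)^2}\;\geq\;\frac{1}{\epsilon^2}\cdot\frac{C^2}{(H^*\cdot C)^2}\;\geq\;\frac{-\omega(\mathcal{C}_\delta)}{\epsilon^2}\;\geq\;\min\Bigl\{\alpha,\beta,-\frac{\omega(\mathcal{C}_\delta)}{\epsilon^2}\Bigr\},
\]
which is the assertion.

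I expect the main obstacle to be the bookkeeping around when $H^*\cdot C=0$ can occur and whether such curves are already covered by the $\min\{\alpha,\beta\}$ term. The point is that $H^*$ is nef but not necessarily big on $Z_{\mathcal{C}_\delta}$ (e.g. $H=F^\delta$ a fiber), so there genuinely are integral curves $C$ with $H^*\cdot C=0$, namely components of fibers — but these are precisely strict transforms of fibers of $\mathbb{F}_\delta$ or exceptional curves $\tilde E_{p_i}$, hence lie in $\mathcal{S}$ or among the exceptional divisors, and for those $C^2\geq\min\{\alpha,\beta\}$ directly. So the case analysis must be: (i) $C^2\geq 0$ — trivial; (ii) $C^2<0$ and $H^*\cdot C=0$ — then $C\in\mathcal{S}$ or $C=\tilde E_{p_i}$, giving $C^2\geq\min\{\alpha,\beta\}$ and, since $D\cdot C\geq 1$, $C^2/(D\cdot C)^2\geq C^2\geq\min\{\alpha,\beta\}$; (iii) $C^2<0$ and $H^*\cdot C>0$ — then $(\pi_{\mathcal{C}_\delta})_*C$ is not $M_0^\delta$ or a fiber (nor $\sim M^0$ when $\delta=0$) since in those sub-cases $C\in\mathcal{S}$ would force $C^2\geq\alpha$, contradicting that we reduce to the "generic" curve; apply Theorem \ref{t1}(b) and the displayed chain above. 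Organizing these three cases carefully, and double-checking that the division inequalities go the right way when the numerator is negative, is the only delicate part; the rest is immediate from Theorem \ref{t1}.
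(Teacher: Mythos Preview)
Your proposal is correct and follows essentially the same route as the paper: dispose of the curves in $\mathcal{S}$ and the exceptional curves using $C^2\geq\min\{\alpha,\beta\}$ together with $D\cdot C\geq 1$, and for the remaining curves combine the $\Delta_{H^*}$-inequality $D\cdot C\geq\epsilon\,H^*\cdot C$ with the bound $C^2/(H^*\cdot C)^2\geq-\omega(\mathcal{C}_\delta)$ from Lemmas~\ref{a1}/\ref{a2}. The paper's write-up is simply the streamlined version of this: it writes $D\cdot C=(D-\epsilon H^*)\cdot C+\epsilon H^*\cdot C\geq\epsilon H^*\cdot C$ in one line and invokes Lemmas~\ref{a1}/\ref{a2} directly, declaring the remaining cases ``trivial''.

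One remark to tighten your exposition: your second pass at the case split (the (i)--(ii)--(iii) list) is redundant with, and slightly at odds with, your opening disposal. Once you have assumed $C\notin\mathcal{S}$ and $C$ is not exceptional, the condition $H^*\cdot C>0$ is \emph{automatic} for every non-zero nef $H$ on $\mathbb{F}_\delta$: indeed $H^*\cdot C=H\cdot(\pi_{\mathcal{C}_\delta})_*C$, and an integral curve on $\mathbb{F}_\delta$ has zero intersection with a non-zero nef class only if it is a fiber, the special section, or (when $\delta=0$) a curve in $|M^0|$. So your case~(ii) is vacuous under the standing assumption, and the muddled sentence in case~(iii) (``contradicting that we reduce to the generic curve'') can be deleted---there is nothing to contradict, you have already excluded those curves. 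With that cleanup your argument is exactly the paper's.
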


\begin{proof}
Let $C$ be an integral curve on $Z_{\mathcal{C}_\delta}$ such that $D\cdot C>0$. Since Inequality (\ref{vvv}) holds if $C^2\geq 0$, we can assume that $C^2<0$.

If $C$ is not exceptional and $(\pi_{\mathcal{C}_\delta})_*C$ is neither linearly equivalent to $M_0^\delta$ nor to a fiber of $\mathbb{F}_\delta$ then, using  Lemma \ref{a1} or \ref{a2}, one gets the following chain of equalities and inequalities:
$$\frac{C^2}{(D\cdot C)^2}= \frac{C^2}{[(D-\epsilon H^*)\cdot C+ \epsilon H^*\cdot C]^2}\geq  \frac{C^2}{ \epsilon^2
(H^*\cdot C)^2}\geq -\frac{1}{\epsilon^2}\omega(\mathcal{C}_\delta).$$

Otherwise, Inequality (\ref{vvv}) holds trivially.

\end{proof}

Since the blowup of the projective plane $\mathbb{P}^2$ at a closed point is isomorphic to the Hirzebruch surface $\mathbb{F}_1$, we can use Theorems \ref{t1} and \ref{b1} to get similar results for surfaces obtained by blowing-up a configuration $\mathcal{C}_{\mathbb{P}^2}=\{p_i\}_{i=1}^N$ over $\mathbb{P}^2$. Indeed, let  $$Z_{\mathcal{C}_{\mathbb{P}^2}} \xrightarrow{\pi'} \mathbb{F}_1 \xrightarrow{\pi_1} \mathbb{P}^2$$ be the factorization of the sequence of blowups attached to $\mathcal{C}_{\mathbb{P}^2}$, $\pi_{\mathcal{C}_{\mathbb{P}^2}}: Z_{\mathcal{C}_{\mathbb{P}^2}}\rightarrow \mathbb{P}^2$, such that $\pi_1: \mathbb{F}_1 \rightarrow \mathbb{P}^2$ is the blowup of $\mathbb{P}^2$ at $p_1$. Clearly $\pi'$ is the composition of blowups centered at the points of the configuration $\mathcal{C}_1:=\{p_i\}_{i=2}^N$. Let $L_1,\ldots,L_r$ be the projective lines passing through $p_1$ whose strict transforms on $\mathbb{F}_1$ pass though points in $\mathcal{C}_1\cap \mathbb{F}_1$.
The strict transforms of these projective lines on $\mathbb{F}_1$ become the fibers $F_1^1,\ldots, F_r^1$ of $\mathbb{F}_1$ passing through some point in $\mathcal{C}_{1}\cap \mathbb{F}_1$. Moreover the exceptional divisor ${E}_{p_1}$ created by the blowup $\pi_1$ becomes the special section of $\mathbb{F}_1$. Now,  $Z_{\mathcal{C}_{\mathbb{P}^2}}=Z_{\mathcal{C}_{1}}$ and thus we can consider the configuration $\mathcal{C}_1$ over $\mathbb{F}_1$ and apply Theorems \ref{t1} and \ref{b1}. Therefore we have obtained proofs of the following Theorems \ref{t1p} and \ref{t2p}.

\begin{theorem}\label{t1p}
Let $\mathcal{C}_{\mathbb{P}^2}=\{p_i\}_{i=1}^N$ be a configuration over the projective plane $\mathbb{P}^2$. Keep the above notation. Let $L$ be a general line in $\mathbb{P}^2$ and set $\mathbf{a}$ the value $\mathbf{a}({\rm APG}(Z_{\mathcal{C}_{1}},\pi'))$ provided by Theorem \ref{fg}.  Then
\begin{equation}
\frac{\tilde{C}^2}{((\iota L^*-\tau E_{p_1}^*)\cdot \tilde{C})^2}\geq -(\mathbf{a}-1)
\end{equation}
for any non-zero divisor $\iota L^*-\tau E_{p_1}^*$ on $Z_{\mathcal{C}_{\mathbb{P}^2}}$ such that $\iota,\tau \in \mathbb{Z}_{\geq 0}$ and $\iota \geq \tau$, and for any integral curve $C$ on ${\mathbb{P}^2}$ such that it is not a line passing through $p_1$.
\end{theorem}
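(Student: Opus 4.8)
The plan is to transport the statement to the Hirzebruch surface $\mathbb{F}_1$ and then quote Lemma~\ref{a1}. Using the factorization $Z_{\mathcal{C}_{\mathbb{P}^2}} \xrightarrow{\pi'} \mathbb{F}_1 \xrightarrow{\pi_1} \mathbb{P}^2$ introduced above, identify $Z := Z_{\mathcal{C}_{\mathbb{P}^2}}$ with $Z_{\mathcal{C}_1}$, where $\mathcal{C}_1 = \{p_i\}_{i=2}^N$ is regarded as a configuration over $\mathbb{F}_1$ (so $\delta = 1$). First I would set up the dictionary between the two natural bases of $\NS(Z)$. Since $\pi_1^* L$ has self-intersection $1$ and meets $E_{p_1}$ trivially, one has $\pi_1^* L \equiv M^1$ in $\Pic(\mathbb{F}_1)$; pulling back by $\pi'$ gives $L^* = (M^1)^*$. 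Likewise $E_{p_1}$ \emph{is} the special section $M_0^1$ of $\mathbb{F}_1$, so $E_{p_1}^* = (M_0^1)^*$, and from the relation $[M_0^1] = [M^1] - [F^1]$ in $\Pic(\mathbb{F}_1)$ we obtain $(F^1)^* = L^* - E_{p_1}^*$. Consequently
\[
\iota L^* - \tau E_{p_1}^* \;=\; (\iota - \tau)\,(M^1)^* + \tau\,(F^1)^* \;=\; H^*, \qquad H := (\iota - \tau)\, M^1 + \tau\, F^1 .
\]

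Next I would check the hypotheses of Lemma~\ref{a1} for the configuration $\mathcal{C}_1$ over $\mathbb{F}_1$, the divisor $H$, and the integral curve $\tilde{C}$. Because $\Nef(\mathbb{F}_1)$ is spanned by $F^1$ and $M^1$, the arithmetic condition $\iota \ge \tau \ge 0$ is exactly what makes $H$ nef; and $H \ne 0$ because $(\iota,\tau) \ne (0,0)$. The pushforward $(\pi_{\mathcal{C}_1})_* \tilde{C}$ is the strict transform $C^{(1)}$ of $C$ on $\mathbb{F}_1$, an integral curve; it is not $M_0^1$, since $M_0^1 = E_{p_1}$ is contracted by $\pi_1$ whereas $C^{(1)}$ maps onto $C$; and it is not a fiber of $\mathbb{F}_1$, because $\pi_1$ sends fibers to lines through $p_1$ and $C$ is, by hypothesis, not such a line. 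Finally $H^* \cdot \tilde C = H \cdot C^{(1)} \ge 0$ by nefness, hence positive whenever it is nonzero, which is the only case in which the quotient in the statement is defined. Lemma~\ref{a1} then yields
\[
\frac{\tilde C^2}{(H^* \cdot \tilde C)^2} \;\ge\; -\,\omega(\mathcal{C}_1) \;=\; -\bigl(\mathbf{a}(\mathrm{APG}(Z_{\mathcal{C}_1},\pi')) - 1\bigr) \;=\; -(\mathbf{a}-1),
\]
and substituting $H^* = \iota L^* - \tau E_{p_1}^*$ gives the claim.

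The one genuinely separate case is the degenerate situation $\mathbf{a} = 1$, in which Lemma~\ref{a1} does not apply (this occurs, e.g., when $\mathcal{C}_1$ is a single free point of level $0$). There the asserted bound is $0$, and I would argue instead via Theorem~\ref{fg}: since $\delta = 1 \ge \mathbf{a}$, the cone $\Eff(Z)$ is spanned by the classes of the $\tilde{E}_p^1$, the $\tilde{F}_j^1$, and $\tilde{M}_0^1$, so every integral curve of negative self-intersection on $Z$ is one of these; none of them is of the form $\tilde C$ for an integral curve $C$ on $\mathbb{P}^2$ that is not a line through $p_1$ (the exceptional divisors and the special section are $\pi_{\mathcal{C}_{\mathbb{P}^2}}$-exceptional, and the fibers of $\mathbb{F}_1$ push down to lines through $p_1$), whence $\tilde C^2 \ge 0$.

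The main obstacle I anticipate is bookkeeping rather than conceptual: making precise the change of basis between $\{L^*, E_{p_i}^*\}$ and $\{(F^1)^*, (M^1)^*, (E^1_{p_i})^*\}$ in $\NS(Z)$, and verifying that the inequality $\iota \ge \tau \ge 0$ corresponds exactly to the nef cone of $\mathbb{F}_1$. Once this dictionary is in place, the statement follows from Lemma~\ref{a1} in the same way as for the surfaces over $\mathbb{F}_\delta$.
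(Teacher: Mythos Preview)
Your proof is correct and follows essentially the same route as the paper: the paper simply says ``apply Theorems~\ref{t1} and~\ref{b1}'' to the configuration $\mathcal{C}_1$ over $\mathbb{F}_1$, and you carry this out explicitly by setting up the dictionary $L^*=(M^1)^*$, $E_{p_1}^*=(M_0^1)^*$, $(F^1)^*=L^*-E_{p_1}^*$, checking that $\iota L^*-\tau E_{p_1}^*$ is the pullback of a nef divisor on $\mathbb{F}_1$, and then invoking Lemma~\ref{a1} (the engine behind Theorem~\ref{t1}(b)) together with Theorem~\ref{fg} for the degenerate case $\mathbf{a}=1$. The only point you leave slightly implicit is that $H^*\cdot\tilde{C}$ is in fact always strictly positive under the hypotheses (this is the content of the Remark following the theorem: $(\iota L^*-\tau E_{p_1}^*)\cdot\tilde{C}=\iota\deg C-\tau\,\mathrm{mult}_{p_1}(C)\ge \iota(\deg C-\mathrm{mult}_{p_1}(C))>0$ since $C$ is not a line through $p_1$), but this does not affect the validity of the argument.
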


\begin{remark}
Notice that, in the above theorem, we do not need to specify that $(\iota L^*-\tau E_{p_1}^*)\cdot \tilde{C} > 0$ because $(\iota L^*-\tau E_{p_1}^*)\cdot \tilde{C} = 0$ forces $C$ to be a line passing through $p_1$.
\end{remark}

\begin{remark}\label{not}

Theorem \ref{t1p} implies that
$$\frac{\tilde{C}^2}{(L^*\cdot \tilde{C})^2}\geq -(\mathbf{a}-1)$$
for any integral curve $C$ on ${\mathbb{P}^2}$ such that it is not a line passing through $p_1$.

\end{remark}

\begin{remark}\label{nuevarem}

Taking $\iota=\tau=1$ in Theorem \ref{t1p} and keeping the same assumptions and notation as in its statement, we deduce the following property: if $C$ is an integral curve on $\mathbb{P}^2$ and it is not a line passing through $p_1$, then
$$\tilde{C}^2 \geq -(\mathbf{a}-1)\left[ \deg(C)-{\rm mult}_{p_1}(C)\right]^2,$$
where ${\rm mult}_{p_1}(C)$ stands for the multiplicity of $C$ at $p_1$.

\end{remark}

\begin{theorem}\label{t2p}
Let $\mathcal{C}_{\mathbb{P}^2}=\{p_i\}_{i=1}^N$ be a configuration over $\mathbb{P}^2$. Keep the above notation, in particular let $\mathbf{a}$ be the value $\mathbf{a}({\rm APG}(Z_{\mathcal{C}_{1}},\pi))$ introduced in Theorem \ref{fg}. Let $\epsilon$ be a positive real number and set $\alpha:=\min\{(\tilde{L}_i)^2\mid i=1,\ldots, r\}$ and $\beta:=\min\big\{(\tilde{E}_{p_i})^2\mid i=1,\ldots,N\big\}$. Also, let $L$ be a general line in $\mathbb{P}^2$. Then,

\begin{itemize}

\item[(a)] If $\mathbf{a}=1$, it holds that $C^2\geq \min\big\{ \alpha, \beta \big\}$ for any integral curve $C$ on $Z_{\mathcal{C}_{\mathbb{P}^2}}$.

\item[(b)] If $\mathbf{a}>1$, it holds that, for every non-zero divisor $H= \iota L^*-\tau E_{p_1}^*$, with $\iota,\tau \in \mathbb{Z}_{\geq 0}$ and $\iota \geq \tau$, the inequality
$$\frac{C^2}{(D\cdot C)^2}\geq \min\big\{ \alpha, \beta, -\frac{\mathbf{a}-1}{\epsilon^2}\big\}$$
is true for any nef divisor $D\in \Delta_{H^*}(Z_{\mathcal{C}_{\mathbb{P}^2}},\epsilon)$ and for any integral curve $C$ on $Z_{\mathcal{C}_{\mathbb{P}^2}}$ such that $D\cdot C>0$.
\end{itemize}

\end{theorem}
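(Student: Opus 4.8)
The plan is to prove Theorem~\ref{t2p} by reducing it to the Hirzebruch case already settled in Theorems~\ref{t1} and~\ref{b1}, using the classical isomorphism $\mathrm{Bl}_{p_1}\mathbb{P}^2\cong\mathbb{F}_1$. Concretely, I would factor the blowup sequence attached to $\mathcal{C}_{\mathbb{P}^2}$ as $Z_{\mathcal{C}_{\mathbb{P}^2}}\xrightarrow{\pi'}\mathbb{F}_1\xrightarrow{\pi_1}\mathbb{P}^2$, where $\pi_1$ is the blowup at $p_1$ (so $\pi_1$ contracts $E_{p_1}$, which is the special section $M_0^1$ of $\mathbb{F}_1$), and $\pi'$ is the composition of blowups at the configuration $\mathcal{C}_1=\{p_i\}_{i=2}^N$ over $\mathbb{F}_1$. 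Then $Z_{\mathcal{C}_{\mathbb{P}^2}}=Z_{\mathcal{C}_1}$, the ambient Hirzebruch surface has $\delta=1>0$, and the integer $\mathbf{a}=\mathbf{a}(\operatorname{APG}(Z_{\mathcal{C}_1},\pi'))$ in the statement is exactly the invariant attached to $\mathcal{C}_1$ by Theorems~\ref{t1} and~\ref{b1}. Under $\pi_1$, the lines $L_1,\dots,L_r$ through $p_1$ whose strict transforms meet $\mathcal{C}_1\cap\mathbb{F}_1$ are precisely the fibers $F_1^1,\dots,F_r^1$ of $\mathbb{F}_1$ passing through points of $\mathcal{C}_1$, and $E_{p_1}=M_0^1$.

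Next I would set up the dictionary between the invariants of the two statements. Since $\tilde{L}_i$ is the strict transform of $F_i^1$ on $Z_{\mathcal{C}_1}$ and $\tilde{E}_{p_1}$ is the strict transform of $M_0^1$, the set $\mathcal{S}$ of Theorem~\ref{t1} for $\mathcal{C}_1$ (namely $M_0^1$ together with the fibers through points of $\mathcal{C}_1$) satisfies $\min\{\tilde{C}^2\mid C\in\mathcal{S}\}=\min\{(\tilde{E}_{p_1})^2,(\tilde{L}_1)^2,\dots,(\tilde{L}_r)^2\}$, while the quantity $\beta$ of Theorem~\ref{t1} for $\mathcal{C}_1$ is $\min\{(\tilde{E}_{p_i})^2\mid i=2,\dots,N\}$; combining these, the bound $\min\{\alpha,\beta\}$ delivered by Theorems~\ref{t1} and~\ref{b1} for $\mathcal{C}_1$ coincides with the $\min\{\alpha,\beta\}$ of Theorem~\ref{t2p}. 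I would also translate the admissible divisors: on $\mathbb{F}_1$ one has $\pi_1^*L\sim M^1$ and $E_{p_1}=M_0^1\sim M^1-F^1$, so pulling back by $\pi'$ gives $L^*=(M^1)^*$ and $E_{p_1}^*=(M_0^1)^*=(M^1)^*-(F^1)^*$, whence $H=\iota L^*-\tau E_{p_1}^*=[(\iota-\tau)M^1+\tau F^1]^*$; and $(\iota-\tau)M^1+\tau F^1$ is a non-zero nef divisor on $\mathbb{F}_1$ exactly when $\iota\geq\tau\geq 0$ and $(\iota,\tau)\neq(0,0)$. Consequently $\Delta_{H^*}(Z_{\mathcal{C}_{\mathbb{P}^2}},\epsilon)=\Delta_{G^*}(Z_{\mathcal{C}_1},\epsilon)$ for the nef divisor $G=(\iota-\tau)M^1+\tau F^1$ on $\mathbb{F}_1$.

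With this dictionary in hand, both assertions follow at once. For (a), $\mathbf{a}=1$ forces $\delta=1\geq\mathbf{a}$ with $\delta>0$, so Theorem~\ref{t1}(a) applies to $\mathcal{C}_1$ and yields $C^2\geq\min\{\alpha,\beta\}$ for every integral curve $C$ on $Z_{\mathcal{C}_1}=Z_{\mathcal{C}_{\mathbb{P}^2}}$. For (b), $\mathbf{a}>1$ gives $0<\delta=1<\mathbf{a}$, which is precisely the hypothesis of Theorem~\ref{b1}; since $\omega(\mathcal{C}_1)=\mathbf{a}-\delta=\mathbf{a}-1$ by the definition in Lemma~\ref{a1}, applying Theorem~\ref{b1} to any $D\in\Delta_{G^*}(Z_{\mathcal{C}_1},\epsilon)=\Delta_{H^*}(Z_{\mathcal{C}_{\mathbb{P}^2}},\epsilon)$ gives $C^2/(D\cdot C)^2\geq\min\{\alpha,\beta,-(\mathbf{a}-1)/\epsilon^2\}$ for every integral curve $C$ with $D\cdot C>0$.

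I do not expect a genuine obstacle: the mathematical content is entirely the reduction to $\mathbb{F}_1$, and the hard work is already encapsulated in Theorems~\ref{fg},~\ref{t1} and~\ref{b1}. The only point requiring real care is the bookkeeping of the second and third paragraphs: checking that the cone $\{\iota L^*-\tau E_{p_1}^*\mid\iota\geq\tau\geq 0\}$ is exactly the $\pi'$-pullback of the nef cone of $\mathbb{F}_1$, and that the $(-1)$-curve/fiber correspondence identifies the constants $\alpha,\beta$ of the $\mathbb{P}^2$-statement with those of the $\mathbb{F}_1$-statement, so that no further curves (such as a general line through $p_1$, whose strict transform on $Z_{\mathcal{C}_{\mathbb{P}^2}}$ has self-intersection $0$ and is therefore harmless) need to be treated separately.
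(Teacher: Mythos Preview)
Your proposal is correct and follows exactly the approach taken in the paper: the paragraph immediately preceding Theorem~\ref{t1p} factors $\pi_{\mathcal{C}_{\mathbb{P}^2}}$ through $\mathbb{F}_1$, identifies $E_{p_1}$ with $M_0^1$ and the lines $L_i$ with the fibers $F_i^1$, and then declares that Theorems~\ref{t1} and~\ref{b1} applied to $\mathcal{C}_1$ yield both Theorems~\ref{t1p} and~\ref{t2p}. Your write-up simply makes explicit the bookkeeping (the identification of $\alpha,\beta$ and the translation $\iota L^*-\tau E_{p_1}^*=[(\iota-\tau)M^1+\tau F^1]^*$) that the paper leaves to the reader.
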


The second part of Theorem \ref{elA} in the introduction coincides with Remark \ref{nuevarem}. Theorem \ref{elB}, also in the introduction, is an slightly less accurate version of Theorems \ref{b1} and \ref{t2p}.

The next subsection concludes the paper by giving two illustrative examples of our results. Our first example shows how to compute the bound given in Theorem \ref{t1}.

\subsection{Examples}
\label{32}

\begin{example}\label{ex2}
Let $\mathcal{C}_\delta$ be any configuration over a Hirzebruch surface $\mathbb{F}_\delta$ whose arrowed proximity graph is that depicted in Figure \ref{fig1} of Example \ref{ex1}. Following \cite{cono_hirzebruch}, by making some computations, one gets that $\mathbf{a}({\rm APG}(Z_{\mathcal{C}_\delta}, \pi_{\mathcal{C}_\delta})) = 6$; and then, if $\delta \geq 6$, the cone  ${\rm Eff}(Z_{\mathcal{C}_\delta})$ is polyhedral and generated by the classes of the strict transforms of the exceptional divisors, $[\tilde{F}_1^\delta]$, $[\tilde{F}_2^\delta]$, $[\tilde{F}_3^\delta]$ and $[\tilde{M}_0^\delta]$.

Therefore, in this case, by Part (a) of Theorem \ref{t1}
$$
C^2\geq \min\{-4,-\delta-3\}=-\delta-3
$$
for any integral curve $C$ on $Z_{\mathcal{C}_\delta}$.\medskip

Assume now that $1\leq \delta\leq 5$, then Part (b) of Theorem \ref{t1} proves that
$$\frac{C^2}{(H^*\cdot C)^2}\geq \min\{-\delta-3, \delta-6\}=
\left\{ \begin{array}{lcc} -5 & \mbox{if} & \delta=1, \\ -\delta-3 & \mbox{if} & 2\leq \delta\leq 5\end{array} \right.
$$
for any nef divisor $H$ on $\mathbb{F}_\delta$  and for any integral curve $C$ on $Z_{\mathcal{C}_\delta}$ such that $H^*\cdot C>0$.\medskip

Finally we study the case $\delta=0$. Figure \ref{fig2} depicts the proximity graph of $\mathcal{C}_0$. For each section $M^0_j\in |M^0|$, $1\leq j\leq 3$, passing through some point in $\mathcal{C}_0$, we have considered the set of points $p_i$ belonging to the strict transform of $M_j^0$ and
we have added an arrow at its maximal point with respect to the ordering $\geq$. Notice that the special section $M_0^0$ is one of these sections  which, for convenience, has been relabelled $M^0_2$. We also assume that the curves $M_1^0$, $M_2^0$ and $M_3^0$ are different. In this way we have obtained a modified arrowed proximity graph.

Since there exist an automorphisms of $\mathbb{F}_0$ mapping the special section to any section in $|M^0|$, we can assume, reasoning as in the proof of Lemma \ref{a2}, that the special section $M_0^0$ is either one of the curves $M_1^0$, $M_2^0$ or $M_3^0$, or none of them. The different choices for $M_0^0$ give rise to distinct arrowed proximity graphs of $(Z_{\mathcal{C}_0},\pi_{\mathcal{C}_0})$ and, therefore, to (possibly) different values $\mathbf{a}=\mathbf{a}(Z_{\mathcal{C}_0},\pi_{\mathcal{C}_0})$. The left-hand side of Table 1 shows these values. Therefore the value $\mathbf{a}_1(\mathcal{C}_0)$ defined before Lemma \ref{a2} equals 4.

As we said, $\mathbb{F}_0$ has two possible rulings over $\mathbb{P}^1$. Considering the projection of $\mathbb{F}_0$ onto the second component, the linear systems $|F^0|$ and $|M^0|$ exchange roles and, reasoning as before, we can assume that the special section $M_0^0$ is either one of the curves $F_1^0$, $F_2^0$ or $F_3^0$, or none of them. It gives rise to different arrowed proximity graphs and, as a consequence, to (possibly) different values $\mathbf{a}$. The right-hand side of Table 1 shows these values and, from them, it is deduced that the value $\mathbf{a}_2(\mathcal{C}_0)$ defined before Lemma \ref{a2} equals 3. Hence, $$\omega(\mathcal{C}_{0})=\max\{\mathbf{a}_1(\mathcal{C}_0),\mathbf{a}_2(\mathcal{C}_0)\}=4.$$

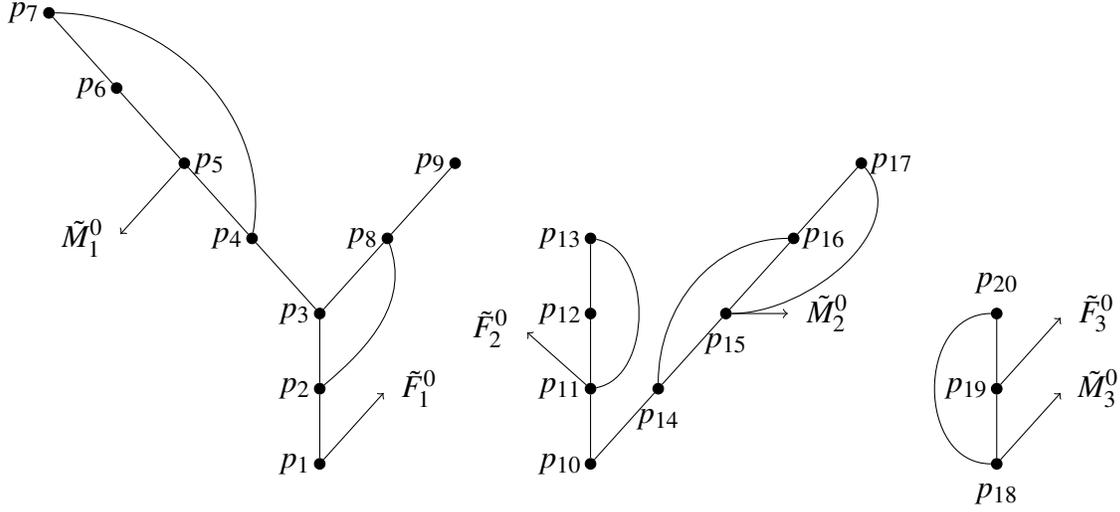
\begin{figure}[htbp]

\begin{center}
\begin{tikzpicture}[x=0.9cm,y=1cm]
    \tikzstyle{every node}=[draw,circle,fill=black,minimum size=4pt, inner sep=0pt]

\draw (0,0) node (p1) [label=left:$p_1$]{};
\draw (0,1) node (p2) [label=left:$p_2$]{};
\draw (0,2) node (p3) [label=left:$p_3$]{};

\draw (1,3) node (p8) [label=left:$p_8$]{};
\draw (2,4) node (p9) [label=left:$p_9$]{};

\draw (-1,3) node (p4) [label=left:$p_4$]{};
\draw (-2,4) node (p5) [label=right:$p_5$]{};
\draw (-3,5) node (p6) [label=left:$p_6$]{};
\draw (-4,6) node (p7) [label=left:$p_7$]{};

\draw (4,0) node (p10) [label=left:$p_{10}$]{};
\draw (4,1) node (p11) [label=left:$p_{11}$]{};
\draw (4,2) node (p12) [label=left:$p_{12}$]{};
\draw (4,3) node (p13) [label=left:$p_{13}$]{};

\draw (5,1) node (p14) [label=below:$p_{14}$]{};
\draw (6,2) node (p15) [label=below:$p_{15}$]{};
\draw (7,3) node (p16) [label=right:$p_{16}$]{};
\draw (8,4) node (p17) [label=right:$p_{17}$]{};

\draw (10,0) node (p18) [label=below:$p_{18}$]{};
\draw (10,1) node (p19) [label=left:$p_{19}$]{};
\draw (10,2) node (p20) [label=above:$p_{20}$]{};

\draw (1,1) node (f1) [white, label=right:$\tilde{F}_1^0$]{};

\draw (3,1.8) node (f2) [white, label=left:$\tilde{F}_2^0$]{};

\draw (7,2) node (m0) [white, label=right:$\tilde{M}_2^0$]{};

\draw (11,2) node (f3) [white, label=right:$\tilde{F}_3^0$]{};

\draw (-3,3) node (m1) [white, label=left:$\tilde{M}_1^0$]{};

\draw (11,1) node (m2) [white, label=right:$\tilde{M}_3^0$]{};

\draw (p1) -- (p2);
\draw (p2) -- (p3);
\draw (p3) -- (p4);
\draw (p4) -- (p5);
\draw (p5) -- (p6);
\draw (p6) -- (p7);

\draw (p3) -- (p8);
\draw (p8) -- (p9);
\draw (p10) -- (p11);
\draw (p11) -- (p12);
\draw (p12) -- (p13);
\draw (p10) -- (p14);
\draw (p14) -- (p15);
\draw (p15) -- (p16);
\draw (p16) -- (p17);
\draw (p18) -- (p19);
\draw (p19) -- (p20);

\draw[->] (p1) -- (f1);
\draw[->] (p11) -- (f2);
\draw[->] (p15) -- (m0);
\draw[->] (p19) -- (f3);
\draw[->] (p5) -- (m1);
\draw[->] (p18) -- (m2);

 \draw (p2) to [out=40,in=-70] (p8);
  \draw (p4) to [out=80,in=0] (p7);
    \draw (p11) to [out=10,in=-10] (p13);
     \draw (p14) to [out=90,in=180] (p16);
     \draw (p15) to [out=0,in=-50] (p17);
       \draw (p18) to [out=180,in=180, distance=1cm] (p20);

 \end{tikzpicture}
\end{center}
\caption{Modified arrowed proximity graph for $\mathcal{C}_0$ in Example \ref{ex2}}\label{fig2}
\end{figure}

Therefore, in this case, Part (b) of Theorem \ref{t1} proves that
$$\frac{C^2}{(H^*\cdot C)^2}\geq \min\{-5,-4\}=-5$$
for any nef divisor $H$ on $\mathbb{F}_0$ and for any integral curve $C$ on $Z_{\mathcal{C}_\delta}$ such that $H^*\cdot C>0$.\medskip


\begin{table}[htp]
\begin{center}
\begin{tabular}{|r|c|c|c|c|||c|c|c|c|}
 \hline  \bf{Curve} $M_0^0$ & $M_1^0$ & $M_2^0$ & $M_3^0$ & other curve in $|M^0|$ & $F_1^0$ & $F_2^0$ & $F_3^0$ & other curve in $|F^0|$ \\\hline
\bf{Value} $\mathbf{a}$ & $4$ & $6$ & $8$ & $9$ & $4$ & $3$ & $3$ & $5$\\\hline
\end{tabular}
\end{center}
\caption{Different values $\mathbf{a}$ in Example \ref{ex2}}
\end{table}%



\end{example}

Our last example uses Theorem \ref{t1p} to prove that, for some configurations over the projective plane, we improve very much a previously given bound on the quotients $C^2/(L^* \cdot C)^2$.

\begin{example}\label{ex3}

Let $r$ and $n$ be integers such that $r\geq 3$ and $n\geq 1$. Consider a configuration  over $\mathbb{P}^2$ $$\mathcal{C}_{\mathbb{P}^2}=\{q_1,q_2\}\cup [\cup_{k=1}^n\{p_{k,1},p_{k,2},p_{k,3},\ldots,p_{k,r}\}]$$  whose proximity graph is that depicted in Figure \ref{fig3}. Note that $\mathcal{C}_{\mathbb{P}^2}$ has $rn+2$ free points.

Let $L_1$ be the line passing through $q_1$ such that its strict transform passes through $q_2$. Assume that there is no other point in $\mathcal{C}_{\mathbb{P}^2}$ belonging to the successive strict transforms of $L_1$.

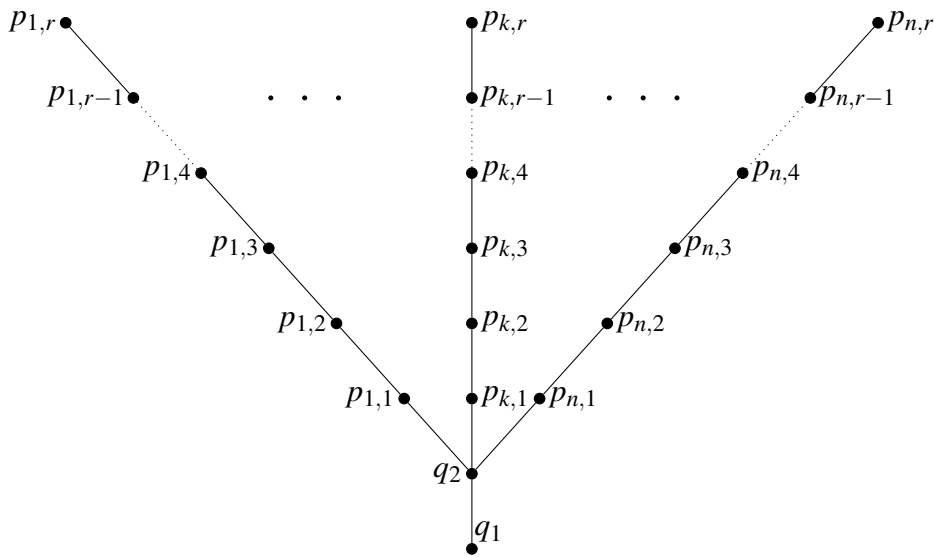
\begin{figure}[htbp]

\begin{center}
\begin{tikzpicture}[x=0.9cm,y=1cm]
    \tikzstyle{every node}=[draw,circle,fill=black,minimum size=4pt, inner sep=0pt]

\draw (0,0) node (q1) [label=above right:$q_1$]{};
\draw (0,1) node (q2) [label=left:$q_2$]{};
\draw (-1,2) node (p11) [label=left:$p_{1,1}$]{};
\draw (-2,3) node (p12) [label=left:$p_{1,2}$]{};
\draw (-3,4) node (p13) [label=left:$p_{1,3}$]{};
\draw (-4,5) node (p14) [label=left:$p_{1,4}$]{};
\draw (-5,6) node (p1r1) [label=left:$p_{1,r-1}$]{};
\draw (-6,7) node (p1r) [label=left:$p_{1,r}$]{};

\draw (0,2) node (pk1) [label=right:$p_{k,1}$]{};
\draw (0,3) node (pk2) [label=right:$p_{k,2}$]{};
\draw (0,4) node (pk3) [label=right:$p_{k,3}$]{};
\draw (0,5) node (pk4) [label=right:$p_{k,4}$]{};
\draw (0,6) node (pkr1) [label=right:$p_{k,r-1}$]{};
\draw (0,7) node (pkr) [label=right:$p_{k,r}$]{};

\draw (1,2) node (pn1) [label=right:$p_{n,1}$]{};
\draw (2,3) node (pn2) [label=right:$p_{n,2}$]{};
\draw (3,4) node (pn3) [label=right:$p_{n,3}$]{};
\draw (4,5) node (pn4) [label=right:$p_{n,4}$]{};
\draw (5,6) node (pnr1) [label=right:$p_{n,r-1}$]{};
\draw (6,7) node (pnr) [label=right:$p_{n,r}$]{};

\draw (-3,6) node[right,scale=0.1]{$\cdots$};
\draw (-2.5,6) node[right,scale=0.1]{$\cdots$};
\draw (-2,6) node[right,scale=0.1]{$\cdots$};

\draw (3,6) node[right,scale=0.1]{$\cdots$};
\draw (2.5,6) node[right,scale=0.1]{$\cdots$};
\draw (2,6) node[right,scale=0.1]{$\cdots$};


\draw (q1) -- (q2);

\draw (q2) -- (p11);
\draw (p11) -- (p12);
\draw (p12) -- (p13);
\draw (p13) -- (p14);
\draw[dotted] (p14) -- (p1r1);
\draw (p1r1) -- (p1r);

\draw (q2) -- (pk1);
\draw (pk1) -- (pk2);
\draw (pk2) -- (pk3);
\draw (pk3) -- (pk4);
\draw[dotted] (pk4) -- (pkr1);
\draw (pkr1) -- (pkr);

\draw (q2) -- (pn1);
\draw (pn1) -- (pn2);
\draw (pn2) -- (pn3);
\draw (pn3) -- (pn4);
\draw[dotted] (pn4) -- (pnr1);
\draw (pnr1) -- (pnr);


 \end{tikzpicture}
\end{center}
\caption{Proximity graph of the configuration $\mathcal{C}_{\mathbb{P}^2}$ in Example \ref{ex3}}\label{fig3}
\end{figure}

Let $\pi_{\mathcal{C}_{\mathbb{P}^2}}: Z_{\mathcal{C}_{\mathbb{P}^2}}\rightarrow \mathbb{P}^2$ be the composition of blowups corresponding to the configuration $\mathcal{C}_{\mathbb{P}^2}$ in this example (see the paragraph before Theorem \ref{t1p}). By Part (b) of Theorem \ref{casos_particulares}, one gets $\mathbf{a} = \mathbf{a}({\rm APG}(Z_{\mathcal{C}_{1}}, \pi')) =
r-1$. Applying Theorem \ref{t1p}, it holds the inequality
\begin{equation}\label{bound}
\frac{\tilde{C}^2}{((L^*-E_{q_1}^*)\cdot \tilde{C})^2}\geq -(r-2)
\end{equation}
for any integral curve $C$ on $\mathbb{P}^2$  which is not a line passing through $q_1$.

As a consequence and noticing that $\tilde{L}_1^2=-1$, it holds the inequality
\begin{equation}\label{newbound}
\frac{\tilde{C}^2}{(L^*\cdot \tilde{C})^2}\geq -(r-2)
\end{equation}
for any integral curve $C$ on $\mathbb{P}^2$.

To finish, applying \cite[Corollary 4.2]{MR4631420} to our example, we deduce that for any integral curve $C$ on $\mathbb{P}^2$, it holds  that
\begin{equation}
\label{imr}
\frac{\tilde{C}^2}{(L^*\cdot \tilde{C})^2} \geq -n \left\lceil \frac{r-2}{4} \right\rceil-2n+1,
\end{equation}
where $\lceil \cdot \rceil$ denotes the ceiling map.

Fixed a value $r$, if $n$ tends to infinite, the cardinality of $\mathcal{C}_{\mathbb{P}^2}$, $rn+2$, tends to infinity, and the bound (\ref{imr}) tends to minus infinity. However, our bounds (\ref{bound}) and (\ref{newbound}) are much better because they remain constant.

We conclude our paper by noting that we have found a family of rational surfaces with arbitrarily large Picard number such that the quotients on the left hand sides of (\ref{bound}) and (\ref{newbound}) have a common bound.

\end{example}



\bibliographystyle{plain}
\bibliography{biblioW}

\end{document}